\documentclass[11pt]{amsart}

\usepackage{amssymb, latexsym, amsfonts, pigpen, enumitem, mathrsfs, amsthm, bbm, stackrel, longtable, hhline}
\usepackage[matrix,arrow,curve]{xy}
\usepackage[colorlinks=true,pagebackref=true,citecolor=cyan,linkcolor=magenta]{hyperref}
\usepackage[table]{xcolor}
\hypersetup{linktocpage}

\usepackage[letterpaper,top=1.05in, bottom=1.05in, left=1.05in, right=1.05in]{geometry}
\usepackage[width=.65\textwidth]{caption}

\newtheorem{lemma}[equation]{Lemma}
\newtheorem{proposition}[equation]{Proposition}

\newtheorem{theoremintr}{Theorem}

\newtheorem{conjectureintr}{Conjecture}

\theoremstyle{definition}
\newtheorem{definition}[equation]{Definition}
\newtheorem{example}[equation]{Example}
\newtheorem{remark}[equation]{Remark}
\newtheorem{notation}[equation]{Notation}

\numberwithin{equation}{section}

\newcommand{\CH}{\mathrm{CH}}
\newcommand{\OGr}{\mathrm{OGr}}
\newcommand{\Gr}{\mathrm{Gr}}
\newcommand{\struct}{\mathcal{O}}
\newcommand{\taut}{\mathcal{T}}
\newcommand{\id}{\mathrm{id}}
\newcommand{\Pic}{\mathrm{Pic}}
\newcommand{\Tor}{\mathrm{Tor}}
\newcommand{\kernel}{\mathrm{Ker}}
\newcommand{\coker}{\mathrm{Coker}}
\newcommand{\im}{\mathrm{Im}}
\newcommand{\res}{\mathrm{res}}
\newcommand{\CK}{\mathrm{CK}}
\newcommand{\BPtr}{\mathrm{BP}\langle 2\rangle}

\makeatletter

\begin{document}

\title[Counter-examples to a conjecture of Karpenko via truncated BP-cohomology]{Counter-examples to a conjecture of Karpenko via truncated~Brown-Peterson cohomology}

\author{Victor Petrov}
\address{St. Petersburg State University, 14th Line V.O. 29b, 199178 Saint Petersburg, Russia and PDMI RAS, Fontanka 27, 191023 Saint Petersburg, Russia}
\email{\href{mailto:victorapetrov@googlemail.com}{victorapetrov@googlemail.com}}

\author{Alois Wohlschlager}
\address{Ludwig-Maximilians-Universit\"at M\"unchen, Theresienstr. 39, D-80333 M\"unchen, Germany}
\email{\href{mailto:wohlschlager@math.lmu.de}{wohlschlager@math.lmu.de}}

\author{Egor Zolotarev}
\address{Ludwig-Maximilians-Universit\"at M\"unchen, Theresienstr. 39, D-80333 M\"unchen, Germany}
\email{\href{mailto:zolotarev@math.lmu.de}{zolotarev@math.lmu.de}, \href{mailto:zolotarev-egv@yandex.ru}{zolotarev-egv@yandex.ru}}

\keywords{Linear algebraic groups, projective homogeneous varieties, generic torsors, Chow groups, oriented cohomology theories}
\subjclass[2020]{20G15, 14C15, 19L41}

\begin{abstract}
    Let $G$ be a split semisimple linear algebraic group and let $X$ denote the generically twisted variety of Borel subgroups in $G$. Nikita Karpenko conjectured that the map from the Chow ring of $X$ to the associated graded ring of the topological filtration on the Grothendieck ring of $X$ is an isomorphism. After having been verified for many $G$, the conjecture was disproved by Nobuaki Yagita for some spinor groups. Later, other counter-examples were constructed by Baek--Karpenko and Baek--Devyatov.
    We present a new method for constructing counter-examples that is based on the connection of the truncated Brown--Peterson cohomology with the connective K-theory.
    Using this method, we disprove the conjecture for new groups, including $\mathrm{Spin}_{15}$, which is now the smallest known spinor group for which the conjecture fails.
\end{abstract}

\setcounter{tocdepth}{1}

\maketitle

\tableofcontents
\allowdisplaybreaks
\section{Introduction}
    The Chow ring $\CH^*(X)$ of a smooth variety $X$ is an important invariant of cohomological nature that contains a lot of geometric and arithmetic information about $X$. However, this ring is difficult to compute in general. In order to obtain a good approximation of the Chow ring, Alexander Grothendieck introduced the topological filtration on the zeroth K-group of $X$ \cite{SGA6} and constructed a canonical epimorphism
    $$ \varphi_X\colon\CH^*(X)\twoheadrightarrow \mathrm{gr}_{\tau} \mathrm{K}_0(X) $$
    that sends the class of a subvariety $Z$ to the class of its structure sheaf $[\mathcal{O}_Z]$, where $\mathrm{gr}_\tau$ denotes the associated graded ring for the topological filtration. Unfortunately, this epimorphism is not injective in general.
    One can ask whether this epimorphism is an isomorphism for some class of varieties. An interesting class of varieties for which the Chow rings have a lot of applications to problems in algebra is that of projective homogeneous varieties. While for the whole class the above morphism is clearly not an isomorphism, motivated by a number of examples Nikita Karpenko conjectured that a positive answer to this question holds for generically twisted varieties of complete flags.
    \begin{conjectureintr}[{\cite[Conjecture 1.1]{Karpenko_types_GFE}}]\label{karp_conj}
        Let $G$ be a split semisimple linear algebraic group over a field $F$, and let $B\subset G$ be a Borel subgroup. Fix a generic $G$-torsor $E$ (see \cite[Chapter I, \S5]{GMS_coh_inv}). Then the morphism $\varphi_{E/B}$ is an isomorphism.
        \end{conjectureintr}
    In \cite[Lemma 2.1]{Karpenko_types_GFE}, it is shown that the conjecture is independent of the choice of the generic $G$-torsor $E$.
    We also note that the conjecture is equivalent to the same statement with the Borel subgroup replaced by any special parabolic subgroup $P\subset G$ \cite[Lemma 4.2]{Karpenko_types_AC}. In particular, a positive answer to the conjecture for $G$ leads to a computation of the Chow ring $\CH^*(E/P)$ for any such $P$, since the Grothendieck ring $\mathrm{K}_0(E/P)$ is known from the work of Panin \cite{Panin_E/P}, and for such varieties, the topological filtration coincides with the computable $\gamma$-filtration \cite[Corollary 7.4]{Karpenko_topological=gamma}. 
    
    The conjecture has been verified for the classical groups of type $\mathrm{A}_n$ and $\mathrm{C}_n$ in \cite[Theorem 1.1]{Karpenko_types_AC}, for the special orthogonal groups $\mathrm{SO}_n$ in \cite[Theorem 1]{Victor_SO}, and for the exceptional groups $\mathrm{G}_2$, $\mathrm{F}_4$, $\mathrm{E}_6^{sc}$ in \cite[Propositions 5.1 and 5.2]{Karpenko_types_GFE}. For the spinor groups $\mathrm{Spin}_{2n+1}$, the conjecture was verified for $n\leq 5$ in \cite[Theorem 3.1]{Karpenko_Spin_11_12} (note that it suffices to consider only odd spinor groups, since the conjecture for $\mathrm{Spin}_{2n+1}$ is equivalent to the same conjecture for $\mathrm{Spin}_{2n+2}$ by \cite[Proposition 2.16]{Baek_Karpenko_counter-example}) and later disproved for $n=8$ and $n=9$ by Nobuaki Yagita \cite[Theorem 10.4 and Lemma 10.5]{YagSpin}. Subsequently, a simplified argument was presented by Karpenko for $n=8$ \cite[Theorem 1.1]{karpenko-counterexample}, and later extended by Baek--Karpenko to $n=9,10$ \cite[Theorem 1.3]{Baek_Karpenko_counter-example}, and by Baek--Devyatov to $n$ any $2$-power $\geq 8$ \cite[Theorem 1.3]{Baek_Dev_counter-examples}. In this paper we find new counter-examples (and present yet another argument for some previous ones) among spinor groups. Namely, we prove the following:
    \setcounter{theoremintr}{1}
    \begin{theoremintr}\label{main_theorem}
        Let $F$ be a field of characteristic zero. Then Conjecture \ref{karp_conj} is false for both $\mathrm{Spin}_{2n+1}$ and $\mathrm{Spin}_{2n+2}$ for $n=7,8,9,10,13,14,15,16$ (the cases $n=8,9,10,16$ were previously known).
    \end{theoremintr}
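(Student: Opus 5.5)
We plan to detect a nonzero kernel of $\varphi_X$ for $X=E/B$ by comparing two oriented theories through $\BPtr$, the truncated Brown--Peterson theory at $p=2$, whose coefficient ring is $\mathbb{Z}_{(2)}[v_1,v_2]$. Quotienting out $v_2$ gives connective K-theory $\CK^*=\mathrm{BP}\langle 1\rangle^*$; quotienting out both $v_1,v_2$ gives $\CH^*\otimes\mathbb{Z}_{(2)}$. For $\CK$ one knows that $\CK^*(X)/v_1$ is $\CH^*(X)$ and that the image of $\CK^*(X)$ in $\mathrm{K}_0(X)[v_1^{\pm1}]$ recovers the topological filtration, so $\mathrm{gr}_\tau\mathrm{K}_0(X)\cong \CK^*(X)/(v_1,\ v_1\text{-torsion})$. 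Hence Conjecture~\ref{karp_conj} for $G$ is equivalent (after $2$-localization, which suffices since the only torsion prime for $\mathrm{Spin}$ is $2$) to $\CK^*(E/B)$ being $v_1$-torsion free. The reduction from $\mathrm{Spin}_{2n+2}$ to $\mathrm{Spin}_{2n+1}$ is \cite[Proposition 2.16]{Baek_Karpenko_counter-example}, so only odd spinor groups need treating.

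The strategy is therefore to exhibit a nonzero $v_1$-torsion element in $\CK^*(E/B)$. Since $E$ is generic, $E/B$ is a quotient of a classifying-type situation, and we will work with $\BPtr^*(E/B)$, computed as in Yagita's approach from the (known) $\BPtr$ or $\mathrm{BP}$ cohomology of $B\mathrm{Spin}_{2n+1}$ and the flag variety $G/B$: restriction $\BPtr^*(E/B)\to\BPtr^*(G/B)$ has image described by the image of $\BPtr^*(BT)\otimes$ (Chern classes of generic torsor), i.e. elements in the image of the characteristic map plus those coming from $\BPtr^*(BG)$. Concretely, we would locate a class $x$ in $\BPtr^*(E/B)$ whose image in $\BPtr^*(\bar X)$ satisfies $v_1 \bar x = v_2 \bar y$ for some rational class $\bar y$ not divisible appropriately, coming from a relation in $\mathrm{BP}^*(B\mathrm{Spin}_{2n+1})$ involving the Milnor-type operations $Q_1,Q_2$ on the Stiefel--Whitney classes $w_{2^i}$. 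Passing to $\CK=\BPtr/v_2$ then makes $v_1 x=0$ in $\CK^*(E/B)$, while we must check $x\neq0$ in $\CK^*(E/B)$, i.e. $x\notin v_2\BPtr^*(E/B)$.

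The key steps in order: (1) establish the $\CK$/topological-filtration criterion above; (2) describe the image of $\BPtr^*(E/B)$ in $\BPtr^*(G/B)$ in low degrees using generators of $\CH^*(\mathrm{Spin}_{2n+1}/B)$ modulo the ideal generated by $e_{2^i}$-type classes; (3) for each $n$ in the list, find the degree where the $Q_1$/$Q_2$ relation produces $v_1x\in v_2\BPtr^*$; (4) prove nonvanishing of $x$ modulo $v_2$. The numbers $n=7,8,9,10,13,14,15,16$ should correspond to those $n$ for which the relevant classes (indices like $2^3$ and combinations near $2^4$) lie within the rank, i.e. $n$ in $[2^3-1,2^3+2]\cup[2^4-3,2^4]$; we would treat them uniformly by a degree argument depending only on the binary position of $n$.

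The main obstacle is step (4): showing $x$ is not divisible by $v_2$ in $\BPtr^*(E/B)$, since the ring $\BPtr^*(E/B)$ is not known explicitly. We would attempt this by mapping to $\BPtr^*(\bar X)$, which is free over the coefficients, and checking that no element of the (computable, degree-bounded) image there has $v_2$ times it equal to $\bar x$; a degree count shows the relevant preimage must lie in a low-degree part of the image that is spanned by explicit products of Chern classes, reducing the check to a finite linear-algebra computation in $\CH^*(G/B)\otimes\mathbb{F}_2$.
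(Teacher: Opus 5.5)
Your step (1) and the overall mechanism match the paper's framework. Karpenko's conjecture amounts to injectivity of $\CK^*(E/B)\to\CK^*(G/B)$, and one detects a kernel element by working in $\BPtr$ and reducing modulo $v_2$ (Proposition~\ref{prop:p_local_karpenko} and Example~\ref{example:bptr}). The test in your step (4) is exactly the paper's criterion. Suppose some $w\in\BPtr^*(G/B)$ is irrational but $v_2w$ is rational. Then any preimage $x$ of $v_2w$ in $\BPtr^*(E/B)$ is not in $v_2\BPtr^*(E/B)$, so it gives a nonzero class in $\BPtr^*(E/B)/v_2\cong\CK^*(E/B)\otimes\mathbb{Z}_{(2)}$ that restricts to zero. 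You do not need the extra relation $v_1x\in v_2\BPtr^*(E/B)$. You also could not get it from a relation $v_1\bar x=v_2\bar y$ on $G/B$, since the kernel of $\BPtr^*(E/B)\to\BPtr^*(G/B)$ is not controlled. Likewise, no input from $B\mathrm{Spin}_{2n+1}$ is needed to describe rational elements: for a generic torsor they are the image of the characteristic map, e.g.\ for $E/P_n$ they are generated by the $u^kc^*_I$ (Lemma~\ref{lemma:rat_generators}).

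The genuine gap is that steps (2)--(4) contain no mechanism that actually produces such a $w$ for the listed $n$.

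\emph{The $B\mathrm{Spin}$ route.} Going through $\mathrm{BP}^*(B\mathrm{Spin}_{2n+1})$ and $Q_1,Q_2$ on Stiefel--Whitney classes is essentially Yagita's approach, which gave only $n=8,9$. It relies on knowledge of $\mathrm{BP}^*(B\mathrm{Spin}_{2n+1})$ that is not available in this range.

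\emph{The list of $n$.} Your ``uniform degree argument depending on the binary position of $n$'' is a fit to the statement, not how the list arises. The paper needs three ingredients. First, an explicit presentation of $\BPtr^*(\OGr(n))$: the basis $z_I$ of Proposition~\ref{prop:generators} and the approximate square relations of Proposition~\ref{prop:relations_z_i}. Second, the fact that $2^{u(n)}\BPtr^*(\OGr(n))$ is rational (Proposition~\ref{prop:t_mult_rat}), so one can compute modulo Totaro's torsion index. Third, a case-by-case computer search for $n=7,8,9,10,13$, producing identities such as $u^{15}c^*_{4,5}\equiv 4v_1v_2z_{1,\ldots,7}$ modulo $8$. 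The cases $n=14,15,16$ then follow from the induction Lemma~\ref{lemma:induction}, which only applies when $u(n)=u(n-1)+1$. Nothing in this method covers $n=11,12$, and nothing in your plan explains their absence.

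\emph{The non-divisibility check.} Reducing it to linear algebra in $\CH^*(G/B)\otimes\mathbb{F}_2$ cannot work. The relevant irrational elements are $2^{u(n)-1}z_{1,\ldots,n}$ for $n\geq8$ and $4v_1z_{1,\ldots,7}$ for $n=7$, which vanish modulo $2$ and modulo $v_1$ respectively. For $n\geq8$, irrationality is a $2$-adic statement settled by Grothendieck's torsion index theorem in $\CH^*\otimes\mathbb{Z}_{(2)}$. For $n=7$, it requires a genuine computation in $\BPtr^*(\OGr(7))$ modulo $8$ that keeps track of $v_1$.
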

    In particular, we obtain the smallest known counter-example, given by  $\mathrm{Spin}_{15}$. The only remaining smaller odd spinor case still open is $\mathrm{Spin}_{13}$. We suspect that the boundary for the conjecture is somehow related to the fact that $\mathrm{Spin}_{13}$ is the largest odd spinor group acting on the projectivization of the spin representation with an open orbit. Consequently, we expect the conjecture to hold for $\mathrm{Spin}_{13}$, which would make our counter-example the smallest possible among all spinor groups. We also emphasize that the counter-example for $\mathrm{Spin}_{15}$ presented here is the most involved and it comes from torsion in $\CH_4$, see Remark \ref{remark_degree_chow}. In contrast, all other known counter-examples -- both those from the literature and the new ones presented in this paper -- are based on torsion in $\CH_3$.
\subsection{Overview of the method}
    According to \cite[Theorem 3.1]{KarpCK}, the conjecture is equivalent to the injectivity of the restriction map for the connective K-theory of $E/P$, where $P\subset G$ is a special parabolic subgroup (see \S \ref{section_2:2} for the recollection on the connective K-theory). Using this reformulation we prove the following (see Proposition \ref{counter1} for a precise statement): if $A^*$ is a free oriented cohomology theory that specializes to the connective K-theory and if Conjecture \ref{karp_conj} holds for $G$, then we have
    $$
    \Tor_1^{A^*(F)}(\underline{A}^*(E/P),\mathbb{Z}[\beta])=0,
    $$
    where $\underline{A}^*(E/P)$ is the cokernel of the restriction map for $E/P$ (see Definition \ref{def:rat_and_irrat}). This approach can also be applied $p$-locally for a prime $p$. Taking $A^*$ to be the truncated Brown--Peterson cohomology $\BPtr^*$, which has the coefficient ring $\mathbb{Z}_{(p)}[v_1,v_2]$, the Tor group is isomorphic to the $v_2$-torsion subgroup of $\underline{\BPtr}^*(E/P)$, see Example \ref{example:bptr}. Consequently, to disprove the conjecture for $G$, it suffices to find a non-rational element in $\BPtr^*$-cohomology of $G/P$ whose multiple by $v_2$ is rational. The advantage of this method, in contrast to the previous ones, is that all computations are performed in the cohomology of the split form rather than in the generically twisted version. 
\subsection{Computations of rational elements}\label{subsection_code}
    In the case of $G=\mathrm{Spin}_{2n+1}$ we perform explicit computations, working with the special parabolic subgroup $P_n\subset\mathrm{Spin}_{2n+1}$ (the numeration of simple roots follows Bourbaki). We begin by constructing the multiplicative generators and relations of the algebraic cobordism ring of the split maximal orthogonal Grassmannian $\OGr(n)=\mathrm{Spin}_{2n+1}/P_n$ (see Proposition \ref{prop:generators}). To make the computation easier, we work with an approximation of the relations in the truncated Brown--Peterson cohomology $\BPtr^*(\OGr(n))$ (Proposition \ref{prop:relations_z_i}). An explicit set of generators of the rational elements is given in Lemma \ref{lemma:rat_generators}.

    Due to their structure, reduction modulo the simplified relations is straightforward (applying them repeatedly until all monomials are square-free).
    We have written computer programs using Rust and the Singular computer algebra system for this task, enabling us to calculate the rational elements of suitable degree in $\BPtr^*(\OGr(n))$ very explicitly.
    They have been published as open-source under \url{https://codeberg.org/alois3264/karpenkos-conjecture} (mirrored at \url{https://github.com/alois31/karpenkos-conjecture}).
    Due to the long running time of the programs in the most interesting cases, their output is also available in the results branch.

    Analyzing the results of the computations, 
    we find the desired $v_2$-torsion elements for $n=7,8,9,10,13$ in Proposition \ref{prop:desired_v2_tors}. This proves Theorem \ref{main_theorem} for these values of $n$. For the remaining cases, we provide an induction argument on $n$ that applies only when the torsion index $t(\mathrm{Spin}_{2n+1})$ as computed by Totaro \cite{totaro} increases strictly.

\subsection{Acknowledgements}
    Part of this work was originally contained in the Master's thesis of the third author, written under the supervision of the first author at SPbU. We are deeply grateful to Nikita Geldhauser, who then suggested to the second author to construct the counter-examples using a computer, which led to this paper. We also would like to thank Alexey Ananyevskiy, Nikita Geldhauser, Andrei Lavrenov, and Maksim Zhykhovich for valuable discussions.

    The first author was supported by the Foundation for the Advancement of Theoretical Physics and Mathematics “BASIS”. The second author has worked on this subject during their (ongoing) doctoral studies, which are partially funded by the Studienstiftung des deutschen Volkes.
    The work of the third author is supported by the DFG research grant AN 1545/4-1.
\section{Preliminaries}
In this section we collect preliminaries on oriented cohomology theories in the sense of Levine--Morel that we use in the main part of the text. Throughout the paper we work over a field $F$ of characteristic zero.
\subsection{Oriented cohomology theories}

An \textit{oriented cohomology theory} in the sense of Levine--Morel (see \cite[Definition 1.1.2]{LevMor}) consists of a contravariant functor $A^*$ from the category of smooth $F$-varieties to the category of commutative, graded rings together with pushforward maps for projective morphisms. Namely, for each projective morphism of smooth varieties $f\colon Y\to X$ of relative dimension $d$, there exists a homomorphism of abelian groups $f_*\colon A^*(Y)\to A^{*+d}(X)$. These data are required to satisfy certain axioms, such as the projection formula, homotopy invariance, and the projective bundle formula. For a morphism of smooth $F$-varieties $f\colon X\to Y$ the associated map of rings $A^*(Y)\to A^*(X)$ is called \textit{pullback along} $f$ and denoted by $f^*$. By a standard abuse of notation, we say that $A^*$ is an oriented cohomology theory, omitting the chosen system of pushforward maps.

We say that the oriented cohomology theory $A^*$ satisfies the \textit{localization axiom} if for any smooth $F$-variety $X$ with smooth closed subvariety $i\colon Z\hookrightarrow X$ and open complement $j\colon U\hookrightarrow X$, the sequence 
$$ A^{*-d}(Z)\xrightarrow{i_*} A^*(X)\xrightarrow{j^*} A^*(U)\to 0 $$
is exact. Note that some authors extend $A^*$ to non-smooth varieties and require exactness of the localization sequence for any subscheme $Z$ (see e.g., \cite[Definition 2.1]{Vishik_op_omega}), but the above form is enough for our purposes.

Let $A^*$ be an oriented cohomology theory. Then one may use the Grothendieck method \cite{Grothendieck_Chern} to define Chern classes $c_i^A(E)\in A^i(X)$ of a vector bundle $E\to X$. Moreover, one can construct a $\mathbb{Z}$-graded formal group law $F_A$ over $A^*(F)$ that satisfies
$$ c_1^A(L\otimes M)=F_A(c_1^A(L),c_1^A(M))\in A^1(X) $$
for any pair of line bundles $L,M$ on $X$, see \cite[Lemma 1.1.3]{LevMor}. This formal group law contains important information about the theory $A^*$.

\subsection{Algebraic cobordism, Brown--Peterson cohomology, and connective K-theory}\label{section_2:2}

Recall that Levine and Morel constructed a universal oriented cohomology theory $\Omega^*$ \cite{LevMor}, known as \textit{algebraic cobordism}. This theory satisfies the localization axiom and the formal group law associated with $\Omega^*$ is the universal one. In particular, the coefficient ring $\Omega^*(F)$ is isomorphic to the Lazard ring $\mathbb{L}\cong\mathbb{Z}[a_1,a_2,\dots]$ with $\mathrm{deg}(a_i)=-i$. 

Given a $\mathbb{Z}$-graded (homogeneous of degree $1$) formal group law defined via a ring morphism $\mathbb{L}\to R$, we can consider the \emph{free} oriented cohomology theory with this law, defined by
$$ A^*(-):=\Omega^*(-)\otimes_{\mathbb{L}}R$$
with obvious pushforward maps. Any such theory satisfies the localization axiom. One particularly useful example that we need is the \textit{Brown–Peterson cohomology} $\mathrm{BP}^*(-)$ with respect to a prime $p$. This is a free oriented cohomology theory whose formal group law is the universal $p$-typical one (see \cite[Theorem A2.1.25]{Ra04}) over
$$ \mathrm{BP}^*(F)\cong\mathbb{Z}_{(p)}[v_1,v_2,\dots]$$
with $\mathrm{deg}(v_k)=1-p^k$. The formal group law of $\mathrm{BP}^*$ can be described in terms of its logarithm, see Appendix \ref{appendix_fgl}. For a natural number $n$, we also consider the \textit{truncated Brown--Peterson cohomology} $\mathrm{BP}\langle n\rangle^*(-)$ defined as the quotient of $\mathrm{BP}^*(-)$ by the ideal generated by $v_k$ for $k>n$. This theory is also a free oriented cohomology theory, and its coefficient ring is isomorphic to $\mathbb{Z}_{(p)}[v_1,\dots,v_n]$.

Another oriented theory that we use is the \textit{connective K-theory} $\CK^*$ \cite{Cai_CK}. Let $X$ be a smooth $F$-variety. In the notation of \textit{loc.cit.}, $\CK^i(X)$ is the group $\CK^{i,-i}(X)$ that is defined by
$$\CK^i(X):=\im(\mathrm{K}_0(\mathcal{M}_i(X))\to \mathrm{K}_0(\mathcal{M}_{i-1}(X))),$$
where $\mathcal{M}_i(X)$ is the category of coherent sheaves on $X$ with codimension of support $\geq i$. The formal group law associated with $\CK^*$ is the multiplicative one over the polynomial ring
$$\CK^*(F)\cong\mathbb{Z}[\beta]$$
with generator $\beta$ of degree $-1$. The connective K-theory is free by \cite[Remark 5.6.2 and Theorem 6.3]{Dai_Levine_CK}, i.e., the canonical morphism $\Omega^*\to\CK^*$ induces an isomorphism
$$ \Omega^*(-)\otimes_{\mathbb{L}}\mathbb{Z}[\beta]\cong \CK^*(-) $$
of oriented theories over $F$. Finally, we notice that the Artin--Hasse exponent provides an isomorphism between the $p$-local multiplicative formal group law and $F_{\mathrm{BP}\langle 1\rangle}$. In particular, for $p=2$ we have an isomorphism of presheaves of graded rings $\mathrm{BP}\langle 1\rangle^*\cong\CK^*\otimes\mathbb{Z}_{(2)}$ by \cite[Theorem 6.9]{Vishik_op_omega} (note that the pushforwards are different), while for an odd prime $p$ the presheaf $\CK^*\otimes\mathbb{Z}_{(p)}$ splits into a direct sum of $p-1$ shifted copies of $\mathrm{BP}\langle 1\rangle^*$.

\subsection{Restriction map and (ir)rational elements}
Let $A^*$ be a free oriented cohomology theory over $F$ and let $L/F$ be a field extension. Then $A^*(-\times_F L)$ is a new oriented theory equipped with the natural transformation
$$ \res^A_{L/F}\colon A^*(-)\to A^*(-\times_F L) $$
of cohomology theories over $F$. We call this morphism the \textit{restriction of scalars}, or simply the \textit{restriction}. Fix an algebraic closure $\overline{F}$ of $F$.
\begin{definition}\label{def:rat_and_irrat}
    Let $X$ be a smooth $F$-variety. We denote by $\overline{A}^*(X)$ the image of the restriction of scalars map along the extension $\overline{F}/F$:
    $$ \overline{A}^*(X):=\im(A^*(X)\xrightarrow{\res^A_{\overline{F}/F}}A^*(X_{\overline{F}})), $$
    and called it the \textit{subring of rational elements}. 
    We also denote by $\underline{A}^*(X)$ the cokernel of the same (restriction) morphism of $A^*(F)$-modules:
    $$ \underline{A}^*(X):=\coker(A^*(X)\xrightarrow{\res^A_{\overline{F}/F}} A^*(X_{\overline{F}})), $$
    and called it the \textit{$A^*(F)$-module of irrational elements}.
\end{definition}

\begin{remark}
    In \cite[Definition 1]{Andrey_Victor_irrational} the module of irrational elements is defined as
    $$A^*(X_{\overline{F}})\otimes_{A^*(X)}A^*(F)$$
    and these definitions are \textit{not} equivalent. Indeed, the above tensor product is isomorphic to the quotient by the ideal generated by the image of the restriction of $\res^A_{\overline{F}/F}$ to the augmented ideal $A^*(X)^+$, in other words the cokernel of the (restriction) morphism of \emph{graded rings}. In particular, it is an $A^*(F)$-algebra, while our module of irrational elements does not admit any natural ring structure.
\end{remark}

\section{Consequences of the Karpenko conjecture}
In this section we prove some consequences of the Karpenko conjecture that we use to find new counter-examples. Throughout this section $G$ is a split semisimple linear algebraic group over a field $k$ of characteristic $0$ and $P\subset G$ is a special parabolic subgroup. We also fix a generic $G$-torsor $E$ (see \cite[Chapter I, \S 5]{GMS_coh_inv}), which is defined over an extension $F/k$. 
By abuse of notation, we write $G/P$ for $(E/P)_{\overline{F}}\cong (G/P)_{\overline{F}}$.
\begin{proposition}\label{counter1}
Let $A^*$ be a free oriented cohomology theory with a morphism of presheaves of rings to the connective K-theory $A^*\to \CK^*$ such that for any smooth $F$-variety $X$ we have $A^*(X)\otimes_{A^*(F)}\mathbb{Z}[\beta]\cong \CK^*(X).$ If Conjecture \ref{karp_conj} holds for $G$, then
$$
\Tor_1^{A^*(F)}(\underline{A}^*(E/P),\mathbb{Z}[\beta])=0.
$$
\end{proposition}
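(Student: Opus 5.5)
Write $R=A^*(F)$, $X=E/P$, and $\overline{X}=G/P$. The plan is to tensor the short exact sequence defining irrational elements with $\mathbb{Z}[\beta]$ over $R$ and read off the Tor term from the long exact sequence.

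\emph{Step 1: the two exact sequences.} Let $K$ be the kernel of the restriction map. There are two short exact sequences of $R$-modules:
$$0\to \overline{A}^*(X)\to A^*(\overline{X})\to \underline{A}^*(X)\to 0,\qquad 0\to K\to A^*(X)\to \overline{A}^*(X)\to 0.$$
Since $\overline{X}=G/P$ is split cellular, $A^*(\overline{X})$ is a free $R$-module, with basis given by the classes of Schubert-type resolutions. Therefore $\Tor_1^R(A^*(\overline{X}),\mathbb{Z}[\beta])=0$. The long exact Tor sequence for the first sequence then gives
$$\Tor_1^R(\underline{A}^*(X),\mathbb{Z}[\beta])\cong \kernel\bigl(\overline{A}^*(X)\otimes_R\mathbb{Z}[\beta]\to A^*(\overline{X})\otimes_R\mathbb{Z}[\beta]\bigr).$$
By the hypothesis, the target is $\CK^*(\overline{X})$.

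\emph{Step 2: reduce to injectivity for $\CK^*$.} Tensor the second sequence with $\mathbb{Z}[\beta]$. Right exactness shows that $A^*(X)\otimes_R\mathbb{Z}[\beta]\cong\CK^*(X)$ surjects onto $\overline{A}^*(X)\otimes_R\mathbb{Z}[\beta]$. The composite
$$\CK^*(X)\twoheadrightarrow \overline{A}^*(X)\otimes_R\mathbb{Z}[\beta]\to \CK^*(\overline{X})$$
is the tensored restriction map. Since the hypothesized isomorphism is natural, this composite is the restriction map $\res^{\CK}_{\overline{F}/F}$ for connective K-theory. We now invoke \cite[Theorem 3.1]{KarpCK}: Conjecture \ref{karp_conj} for $G$ is equivalent to injectivity of this restriction for $E/P$ with $P$ special. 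So if the conjecture holds, the composite is injective. The first map is surjective, so the second map is injective as well: if $y=\bar x$ maps to $0$, then $x$ maps to $0$, hence $x=0$ and $y=0$. By Step 1 the Tor group vanishes.

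\emph{Expected obstacles.} The main point to check is the compatibility in Step 2. The isomorphism $A^*(-)\otimes_R\mathbb{Z}[\beta]\cong\CK^*(-)$ must be natural with respect to pullbacks, in particular along the base change $\overline{X}\to X$. It must also hold for $\overline{X}$ viewed over $\overline{F}$. The coefficient rings agree, since $A^*(\overline{F})=A^*(F)$ for a free theory. Both points should follow because the isomorphism is induced by the given morphism of presheaves $A^*\to\CK^*$. A smaller point is freeness of $A^*(G/P)$ over $R$. This is standard for free theories on cellular varieties, obtained via the localization axiom or via the isomorphism $\Omega^*(G/P)\otimes_{\mathbb{L}}R$ with $\Omega^*(G/P)$ free over $\mathbb{L}$.
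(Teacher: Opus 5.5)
Your proposal is correct and follows the paper's proof essentially verbatim. It uses the same two short exact sequences, the freeness of $A^*(G/P)$ to identify the Tor group with the kernel of $\overline{A}^*(E/P)\otimes_{A^*(F)}\mathbb{Z}[\beta]\to\CK^*(G/P)$, and the factorization of $\res^{\CK}_{\overline{F}/F}$ through the surjection from $\CK^*(E/P)$ combined with \cite[Theorem 3.1]{KarpCK}. The only differences are cosmetic: the paper first notes that the surjection is in fact bijective, whereas you deduce injectivity of the second map directly, and the naturality checks you flag are exactly what the paper leaves as ``not hard to see''.
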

\begin{proof}
Consider the restriction map $\res^A_{\overline{F}/F}\colon A^*(E/P)\to A^*(G/P)$. It gives us obvious short exact sequences of $A^*(F)$-modules
$$
0\to \kernel(\res^A_{\overline{F}/F})\to A^*(E/P)\to \overline{A}^*(E/P)\to 0,
$$
$$
0\to \overline{A}^*(E/P)\to A^*(G/P)\to \underline{A}^*(E/P)\to 0.
$$
Applying $-\otimes_{A^*(F)} \mathbb{Z}[\beta]$ and using the fact that $A^*(G/P)$ is a free $A^*(F)$-module, we obtain the following exact sequences
$$
\kernel(\res^A_{\overline{F}/F})\otimes_{A^*(F)} \mathbb{Z}[\beta]\to \CK^*(E/P)\to \overline{A}^*(E/P)\otimes_{A^*(F)} \mathbb{Z}[\beta]\to 0,
$$
$$
0\to \Tor_1^{A^*(F)}(\underline{A}^*(E/P),\mathbb{Z}[\beta])\to \overline{A}^*(E/P)\otimes_{A^*(F)} \mathbb{Z}[\beta]\to \CK^*(G/P).
$$
It is not hard to see that the restriction map $\res^{\CK}_{\overline{F}/F}$ decomposes into the composition
$$\CK^*(E/P)\to \overline{A}^*(E/P)\otimes_{A^*(F)} \mathbb{Z}[\beta]\to \CK^*(G/P)$$
and it is injective by Conjecture \ref{karp_conj}, see \cite[Theorem 3.1]{KarpCK}. Thus, the first map is injective, but it is also surjective by the first exact sequence above. We obtain that the second morphism is injective as well and the desired vanishing follows from the second exact sequence above.
\end{proof}

\begin{remark}
    An obvious example of an oriented theory that satisfies assumption of the previous proposition is algebraic cobordism $\Omega^*$. However, computing the group
    $$ \Tor_1^{\mathbb{L}}(\underline{\Omega}^*(E/P),\mathbb{Z}[\beta]) $$
    seems to be complicated in general.

    Yagita conjectured that the restriction map $\Omega^*(E/P)\to \Omega^*(G/P)$ is injective \cite[Conjecture 8.6]{YagSpin} (he states it only for Brown--Peterson cohomology of generalized generic Rost motives, but one can easily deduce the above form using \cite{J-inv,Sem_Zhyk} and Quillen's idempotents). To the best of our knowledge this conjecture is still open. We claim that the following conditions are equivalent:
    \begin{center}
        \begin{enumerate}
            \item The Yagita conjecture holds for $G$ and $\Tor_1^{\mathbb{L}}(\underline{\Omega}^*(E/P),\mathbb{Z}[\beta])$ vanishes.
            \item The Karpenko conjecture holds for $G$.
        \end{enumerate}
    \end{center}
    Assume that (1) holds. The injectivity of $\res^\Omega_{\overline{F}/F}$ implies existence of the short exact sequence of $\mathbb{L}$-modules
    $$ 0\to \Omega^*(E/P)\xrightarrow{\res^\Omega_{\overline{F}/F}} \Omega^*(G/P)\to \underline{\Omega}^*(E/P)\to 0, $$
    which induces the exact sequence
    \begin{align}\label{ex_seq_yagita} 0\to \Tor_1^{\mathbb{L}}(\underline{\Omega}^*(E/P),\mathbb{Z}[\beta])\to \CK^*(E/P)\xrightarrow{\res^{\CK}_{\overline{F}/F}} \CK^*(G/P). \end{align}
    The left term vanishes by the assumption. Hence, the restriction map $\res^{\CK}_{\overline{F}/F}$ is injective, which is equivalent to the Karpenko conjecture.
    
    Conversely, the Karpenko conjecture implies Yagita's conjecture \cite[Lemma 8.7]{YagSpin} and the claim (1) follows using exact sequence \ref{ex_seq_yagita} again.
\end{remark}

Since some computations that we will present are much easier locally at a prime, mainly due to the coefficient ring of the Brown-Peterson ring having more favourable degrees of the parameters in comparison to algebraic cobordism, we need the following $p$-local version of the above proposition.

\begin{proposition}\label{prop:p_local_karpenko}
    Let $p$ be a prime and let $A^*$ be a free oriented cohomology theory with a morphism of presheaves of rings to the $p$-local connective K-theory $A^*\to\CK^*\otimes\mathbb{Z}_{(p)}$ such that for any smooth $X$ we have $A^*(X)\otimes_{A^*(F)}\mathbb{Z}_{(p)}[\beta]\cong \CK^*(X)\otimes\mathbb{Z}_{(p)}.$ If Conjecture \ref{karp_conj} holds for $G$, then we have
    $$ \Tor_1^{A^*(F)}(\underline{A}^*(E/P),\mathbb{Z}_{(p)}[\beta])=0. $$
\end{proposition}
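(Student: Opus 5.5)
The argument is a direct transcription of the proof of Proposition \ref{counter1}, with $\mathbb{Z}[\beta]$ replaced by $\mathbb{Z}_{(p)}[\beta]$ and $\CK^*$ replaced by $\CK^*\otimes\mathbb{Z}_{(p)}$. The only genuinely new input is a $p$-local form of Karpenko's reformulation \cite[Theorem 3.1]{KarpCK}. Concretely, we need that Conjecture \ref{karp_conj} for $G$ implies injectivity of
$$\res^{\CK}_{\overline{F}/F}\otimes\mathbb{Z}_{(p)}\colon \CK^*(E/P)\otimes\mathbb{Z}_{(p)}\to \CK^*(G/P)\otimes\mathbb{Z}_{(p)}.$$
This follows at once from the integral statement, because $\mathbb{Z}_{(p)}$ is flat over $\mathbb{Z}$: localization preserves injectivity of the integral restriction map.

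First, start from the restriction $\res^A_{\overline{F}/F}\colon A^*(E/P)\to A^*(G/P)$ and split it, as before, into the two short exact sequences of $A^*(F)$-modules. One involves $\kernel(\res^A_{\overline{F}/F})$ and $\overline{A}^*(E/P)$; the other involves $\overline{A}^*(E/P)$, $A^*(G/P)$ and $\underline{A}^*(E/P)$.

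Next, apply $-\otimes_{A^*(F)}\mathbb{Z}_{(p)}[\beta]$. For the second sequence we use that $A^*(G/P)$ is a free $A^*(F)$-module. This holds because $A^*$ is free, so $A^*(G/P)=\Omega^*(G/P)\otimes_{\mathbb{L}}A^*(F)$, and $\Omega^*(G/P)$ is free over $\mathbb{L}$ with basis given by a cellular decomposition. Freeness kills $\Tor_1$ of the middle term, so we get the exact sequence
$$0\to \Tor_1^{A^*(F)}(\underline{A}^*(E/P),\mathbb{Z}_{(p)}[\beta])\to \overline{A}^*(E/P)\otimes_{A^*(F)}\mathbb{Z}_{(p)}[\beta]\to \CK^*(G/P)\otimes\mathbb{Z}_{(p)}.$$
The first sequence, by right exactness, shows that
$$\CK^*(E/P)\otimes\mathbb{Z}_{(p)}\to \overline{A}^*(E/P)\otimes_{A^*(F)}\mathbb{Z}_{(p)}[\beta]$$
is surjective. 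Both identifications use the hypothesis $A^*(X)\otimes_{A^*(F)}\mathbb{Z}_{(p)}[\beta]\cong\CK^*(X)\otimes\mathbb{Z}_{(p)}$, applied to $X=E/P$ and to $X=G/P$ over $\overline{F}$. For the latter, note that the base change of $A^*$ to $\overline{F}$ is again free, with the same coefficients, so the hypothesis carries over.

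The composite
$$\CK^*(E/P)\otimes\mathbb{Z}_{(p)}\to\overline{A}^*(E/P)\otimes_{A^*(F)}\mathbb{Z}_{(p)}[\beta]\to\CK^*(G/P)\otimes\mathbb{Z}_{(p)}$$
is $\res^{\CK}\otimes\mathbb{Z}_{(p)}$. This rests on naturality: the identification $A^*(-)\otimes_{A^*(F)}\mathbb{Z}_{(p)}[\beta]\cong\CK^*(-)\otimes\mathbb{Z}_{(p)}$ comes from a morphism of presheaves $A^*\to\CK^*\otimes\mathbb{Z}_{(p)}$, and restriction of scalars is pullback along $(E/P)_{\overline{F}}\to E/P$, which that morphism respects. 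The composite is injective by the $p$-local Karpenko reformulation above. Hence the first map is injective, and being surjective it is an isomorphism. The second map is therefore injective, and exactness of the displayed sequence forces the $\Tor_1$ term to vanish.

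The main obstacle is the compatibility just used: that the base-changed isomorphisms are natural with respect to restriction along $\overline{F}/F$, so that the composite really equals $\res^{\CK}\otimes\mathbb{Z}_{(p)}$. I would handle it by noting that everything is induced by the presheaf morphism $A^*\to\CK^*\otimes\mathbb{Z}_{(p)}$, which commutes with pullbacks, exactly as in the proof of Proposition \ref{counter1}. The rest of the argument is formal.
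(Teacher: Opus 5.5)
Your proposal is correct and follows the paper's approach: the paper's proof just says the argument of Proposition \ref{counter1} carries over, and you give that transcription with $\mathbb{Z}_{(p)}[\beta]$ and $\CK^*\otimes\mathbb{Z}_{(p)}$ in place of the integral versions. You also make explicit the one point the paper leaves implicit: the $p$-local injectivity of $\res^{\CK}_{\overline{F}/F}$ follows from the integral statement of \cite[Theorem 3.1]{KarpCK} because $\mathbb{Z}_{(p)}$ is flat over $\mathbb{Z}$.
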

\begin{proof}
    The argument is the same as in the previous proposition.
\end{proof}

\begin{remark}
    One can deduce from the main results of \cite{J-inv,Sem_Zhyk} that the $p$-torsion of $\CH^*(E/P)$ can be non-trivial only for torsion primes of $G$, and by the proof of \cite[Theorem 3.1]{KarpCK} the same holds for $\CK^*(E/P)$. Therefore, the previous proposition is interesting only if $p$ is a torsion prime of $G$.
    In fact, using Proposition \ref{prop:t_mult_rat} one sees directly that if $p$ is not a torsion prime, then already $\underline{A}^*(E/P)=0$.
\end{remark}

\begin{example}\label{example:bptr}
    The main example for us is $A^*=\mathrm{BP}\langle2\rangle^*$ for $p=2$, whose coefficient ring is given by $\mathbb{Z}_{(2)}[v_1,v_2]$. In this case the projection $\mathrm{BP}\langle2\rangle^*(F)\to\CK^*(F)\otimes\mathbb{Z}_{(2)}=\mathbb{Z}_{(2)}[\beta]$ is the quotient by the principal ideal generated by $v_2$ (element $v_1$ goes to $\beta$). It follows that the group $$\Tor^{\mathbb{Z}_{(2)}[v_1,v_2]}_1(\underline{\mathrm{BP}\langle2\rangle}^*(E/P),\mathbb{Z}_{(2)}[\beta])$$ is isomorphic to the group of $v_2$-torsion irrational elements
    $$ {}_{v_2}\underline{\mathrm{BP}\langle2\rangle}^*(E/P)= \{ \alpha\in\underline{\mathrm{BP}\langle2\rangle}^*(E/P)\,|\, v_2\cdot\alpha=0 \}. $$
    In explicit terms, in order to construct a non-trivial element in the above group, it is enough to find a non-rational element in $\mathrm{BP}\langle2\rangle^*(G/P)$ whose $v_2$-multiple is rational.
\end{example}

\section{Algebraic cobordism of split maximal orthogonal Grassmannians}
In this section we construct multiplicative generators for the algebraic cobordism ring of a split maximal orthogonal Grassmannian. The construction is a slight modification of the one known for Chow groups \cite[\S 86]{EKM} (the original reference is \cite[\S 2]{Vishik_OGr}). We need the following technical lemma.

\begin{lemma}[cf., {\cite[Proposition 58.10]{EKM}}]\label{element}
Let $q:E\to X$ be a vector bundle and let $E'\subset E$ be a subbundle of corank $r$. Then there exists an element $\alpha\in \Omega^0(\mathbb{P}(E'))$, such that
\begin{equation}\label{equation1}
    j_*(\alpha)=\sum_{k=0}^r (-1)^{k} c_k^{\Omega}(E/E')\xi^{r-k},
\end{equation}
where $\xi$ denotes the first Chern class of $\mathcal{O}(1)$ over $\mathbb{P}(E)$, and $j$ is the embedding of the corresponding projective bundles $j:\mathbb{P}(E')\to \mathbb{P}(E)$.
\end{lemma}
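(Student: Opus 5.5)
The plan is to realize the right-hand side of \eqref{equation1} as the top Chern class of a vector bundle on $\mathbb{P}(E)$ whose zero locus is exactly $\mathbb{P}(E')$. We then use the self-intersection/zero-section formalism in $\Omega^*$ to write that class as a pushforward from $\mathbb{P}(E')$.

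Let $\mathcal{O}(-1)\subset q^*E$ be the tautological line subbundle on $\mathbb{P}(E)$ (write $q$ also for the projection $\mathbb{P}(E)\to X$). Compose the inclusion with the projection $q^*E\to q^*(E/E')$. This gives a section $s$ of the rank $r$ bundle $V:=q^*(E/E')\otimes\mathcal{O}(1)$. The section $s$ vanishes precisely on $\mathbb{P}(E')$, and it is a regular section: locally $E'$ is cut out by $r$ independent linear coordinates, so the zero scheme is the smooth subvariety $\mathbb{P}(E')$ of codimension $r$. Its normal bundle is $V|_{\mathbb{P}(E')}$.

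Next we use the standard fact in Levine--Morel theory (\cite[Lemma 6.6.7]{LevMor}, or equivalently the axiom for transversal pullback combined with the definition of the top Chern class via zero sections) that for a section of a rank $r$ bundle $V$ transversal to the zero section, with zero locus $Z$ and $j\colon Z\hookrightarrow Y$, one has $c_r^{\Omega}(V)=j_*(1_Z)$. Thus $c_r^\Omega(V)=j_*(1)$, so we may even take $\alpha=1$ provided the Chern class computation below yields exactly \eqref{equation1}. In $\Omega^*$ this is not automatic, since the formal group law is not additive.

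The main obstacle is computing $c_r^\Omega(q^*(E/E')\otimes\mathcal{O}(1))$. By the splitting principle, with Chern roots $x_1,\dots,x_r$ of $E/E'$, this equals $\prod_i F_\Omega(x_i,\xi)$, not $\prod_i(x_i+\xi)$. We also need the sign convention: the right-hand side $\sum(-1)^kc_k\xi^{r-k}=\prod(\xi-x_i)$ matches $E/E'\otimes\mathcal{O}(1)$ only if $\xi$ is the first Chern class of $\mathcal{O}(1)$ with roots of the dual, so I will fix conventions so that the additive version reads $\prod(\xi-x_i)$ (using $E^\vee$ if needed). To handle the correction, write $F_\Omega(\xi,\bar x_i)=(\xi-x_i)\cdot u_i$ with $u_i=1+(\text{terms of positive degree in }\xi,x_i\text{ with }\mathbb{L}\text{-coefficients})$; this is possible since $F(\xi,\chi(x))-\xi+x$ is divisible by $\xi-x$ as a power series. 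Then $u=\prod u_i$ is an invertible element of $\Omega^0(\mathbb{P}(E))$, symmetric in the roots and hence a polynomial in $c_k(E/E')$ and $\xi$. This gives $j_*(1)=u\cdot\prod(\xi-x_i)$. Setting $\alpha:=j^*(u^{-1})\in\Omega^0(\mathbb{P}(E'))$, the projection formula gives $j_*(\alpha)=u^{-1}j_*(1)=\prod(\xi-x_i)=\sum_{k}(-1)^kc_k^\Omega(E/E')\xi^{r-k}$, as required. The remaining checks are that $u$ is genuinely invertible, since it is $1$ plus nilpotent on a variety, and that the degree-$0$ bookkeeping is consistent.
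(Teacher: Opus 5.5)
Your proof is correct, but it takes a genuinely different route from the paper's.

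\textbf{The paper's route.} The paper argues purely by existence. On $\mathbb{P}(E/E')$ the Grothendieck relation $\sum_k(-1)^kc_k^\Omega(E/E')\tilde\xi^{r-k}=0$ holds. Pulling it back along the linear projection $\mathbb{P}(E)\setminus\mathbb{P}(E')\to\mathbb{P}(E/E')$, which pulls $\tilde\xi$ back to $\xi$, shows that the right-hand side of \eqref{equation1} restricts to zero on the open complement. Exactness of $\Omega^0(\mathbb{P}(E'))\xrightarrow{j_*}\Omega^r(\mathbb{P}(E))\to\Omega^r(\mathbb{P}(E)\setminus\mathbb{P}(E'))\to0$ then produces $\alpha$.

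\textbf{Your route.} You identify $j_*(1)$ with the top Chern class of $q^*(E/E')\otimes\mathcal{O}(1)$ via a transverse section, and then divide out the unit $u$ coming from the formal group law.

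\textbf{Comparison.} Your route does not use the localization axiom at all; it needs only transverse base change, homotopy invariance and $c_r=s_0^*s_{0*}(1)$. It also gives an explicit answer: $\alpha=j^*(u^{-1})$ is a unit of $\Omega^0(\mathbb{P}(E'))$ reducing to $1$ in $\CH^0$. That is the fact standing behind the later identification $\mathrm{pr}_{\CH}(z_k)=e_k$. The paper's route is two lines, needs no power-series bookkeeping, and gets the signs right automatically, because it uses the very relation that defines the Chern classes.

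\textbf{The sign convention.} This is the one place your write-up should be tightened. The statement is genuinely convention-sensitive. Take $r=1$ and insert for $c_1(E/E')$ the root $\bar x$ with $j_*(1)=F_\Omega(\bar x,\xi)$. On the complement $\mathcal{O}(-1)\cong q^*(E/E')$, so $\xi$ restricts to $[-1]_F(\bar x)$. Then $\xi-\bar x$ restricts to $[-1]_F(\bar x)-\bar x$, which is nonzero in general, so no $\alpha$ would exist.

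\textbf{The fix.} Run your own factorization on $\mathbb{P}(W)$ for the nowhere-vanishing section of $W\otimes\mathcal{O}(1)$ induced by $\mathcal{O}(-1)\subset W$. This gives $u\cdot\prod_i\bigl(\xi-[-1]_F(\bar w_i)\bigr)=0$ with $u$ a unit. Uniqueness in the projective bundle formula then forces the Chern classes defined by Grothendieck's relation, as the paper uses them, to be $c_k(W)=e_k\bigl([-1]_F(\bar w_1),\ldots,[-1]_F(\bar w_r)\bigr)$.

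So call $\bar x_i$ the roots for which $j_*(1)=\prod_iF_\Omega(\bar x_i,\xi)$, and set $x_i:=[-1]_F(\bar x_i)$. Your factorization $F_\Omega(\xi,\bar x_i)=(\xi-x_i)u_i$ is then exactly what is needed. No passage to $E^\vee$ is required, and such a passage would not be compatible with $\mathbb{P}(E')\subset\mathbb{P}(E)$ anyway.
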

\begin{proof}
Using the projective bundle formula for $E/E'$ we have 
$$
    \sum\limits_{k=0}^r (-1)^k c_k^{\Omega}(E/E')\Tilde{\xi}^{r-k}=0,
$$
in $\Omega^*(\mathbb{P}(E/E'))$, where $\Tilde{\xi}$ is the first Chern class of the canonical line bundle over $\mathbb{P}(E/E')$. Applying the pullback morphism with respect to the canonical map $\mathbb{P}(E)\setminus \mathbb{P}(E')\to \mathbb{P}(E/E')$, we see that the restriction of the right hand side of the formula \ref{equation1} to $\mathbb{P}(E)\setminus \mathbb{P}(E')$ is trivial. The result follows from the exactness of the localization sequence
$$
   \Omega^0(\mathbb{P}(E'))\xrightarrow{j_*} \Omega^r(\mathbb{P}(E))\to \Omega^r(\mathbb{P}(E)\setminus \mathbb{P}(E'))\to 0.
$$
\end{proof}

\begin{notation}
Let $V$ be a vector space of dimension $2n+1$ over a field $F$ with a quadratic form $q:V\to F$. Consider the variety of maximal totally isotropic subspaces in $V$, which is a closed subscheme of the usual Grassmannian $\Gr(n,V)$. We call this variety the \textit{maximal orthogonal Grassmannian of} $q$ and denote it by $\OGr(q)$. If $q$ is a split quadratic form $q=q_s$, then we write simply $\OGr(n)$. This variety can be identified with the quotient of $\mathrm{Spin}_{2n+1}$ by the parabolic subgroup $P_n$. We define the \textit{tautological} vector bundle $\taut_n$ of rank $n$ over $\OGr(n)$ to be the pullback of the tautological bundle along $\OGr(n)\hookrightarrow \Gr(n,V)$. There is an embedding $j_n:\OGr(n-1)\hookrightarrow \OGr(n)$ given by the choice of an isotropic line $L\subset V$.
\end{notation}
\begin{notation}
We denote the projective quadric correspond to the split quadratic form $q_s$ by $Q$. Choose a maximal totally isotropic subspace $U$ of $V$ and denote by $i:\mathbb{P}(U)\to Q$ the embedding of its projectivization in the quadric. We denote by $l:=i_*(1)\in \Omega^{n}(Q)$ the corresponding class in the algebraic cobordism ring of $Q$ and by $h=c_1^{\Omega}(\mathcal{O}_Q(1))\in\Omega^1(Q)$ the first Chern class of the canonical bundle.
\end{notation}
\begin{proposition}\label{prop:generators}
There are unique elements $z_k\in \Omega^k(\OGr(n))$ for $1\leqslant k\leqslant n$, such that the monomials $z_1^{i_1}z_2^{i_2}\dots z_n^{i_n}$ for $i_k\in \{0,1\}$ form a free basis of $\Omega^*(\OGr(n))$ over $\mathbb{L}$, and the Chern classes of $\taut_n^\vee$ are computed as follows (where $F_\Omega(t,t)=:\sum_{i=1}^\infty d_it^i$):
$$
    c_k^\Omega(\taut_n^\vee)=(-1)^k\sum\limits_{i=0}^{n-k} d_{i+1}z_{k+i}.
$$
\end{proposition}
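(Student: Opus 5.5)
We argue by induction on $n$, using the standard geometric decomposition of $\OGr(n)$ from \cite[\S 86]{EKM}. Fix an isotropic line $L\subset V$ and the embedding $j_n\colon \OGr(n-1)\hookrightarrow\OGr(n)$; here $\OGr(n-1)$ is the maximal orthogonal Grassmannian of $L^\perp/L$. Its open complement $\OGr(n)\setminus\OGr(n-1)$ is an affine bundle over the quadric $Q$ (or over a suitable cell). Concretely, it consists of those $W$ with $L\not\subset W$, and $W\mapsto (W\cap L^\perp)$ is a vector-bundle-type fibration.

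\textbf{Step 1: construct the $z_k$.} Since $\det(\taut_n^\vee)$ generates the Picard group, the classes $c_k^\Omega(\taut_n^\vee)$ are defined, and the formula in the statement is triangular. Its leading coefficient is $d_1=2$ (since $F_\Omega(t,t)=2t+\dots$), and the next coefficients $d_{i+1}$ lie in $\mathbb{L}$ of negative degree. So we must produce the $z_k$ directly rather than by solving for them, which would require dividing by $2$. Following the Chow case, let $z_k$ be the class of the push-forward to $\OGr(n)$ of the fundamental class of the incidence locus
\[\{W : W\cap U_{n+1-k}\neq 0\}\]
for a fixed isotropic flag. More concretely, take the correspondence $\OGr(n)\leftarrow \mathbb{P}(\taut_n)\to Q$ and set $z_k=\pi_*\,\mathrm{pr}^*(\text{class of an isotropic } \mathbb{P}^{n-k}\subset Q)$. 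To compute $c_k^\Omega(\taut_n^\vee)$, apply Lemma~\ref{element} to $\taut_n\subset V\otimes\struct$. The subbundle $\taut_n^{\perp}=\taut_n$ has quotient $V/\taut_n\cong \taut_n^\vee\oplus\struct$, up to filtration. This yields an element $\alpha\in\Omega^0(\mathbb{P}(\taut_n))$ with
\[j_*\alpha=\sum_k(-1)^kc_k(\taut_n^\vee)\xi^{n+1-k}.\]
Pushing forward to $\OGr(n)$ and using $h$, $l$ on $Q$ (with $h^{n+1}$ related to $2l$ by the cobordism relation on the quadric), the powers $\xi^{m}$ pushed forward give exactly the expansion of $[2]_\Omega(t)=F_\Omega(t,t)$. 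This is where the coefficients $d_{i+1}$ enter, because $h^{n-k+i}$ on $Q$ is expressed through the linear-subspace classes with coefficients from $F_\Omega(t,t)$.

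\textbf{Step 2: freeness of the square-free monomials.} By localization, the sequence
\[\Omega^*(\OGr(n-1))\xrightarrow{j_{n*}}\Omega^*(\OGr(n))\to\Omega^*(\text{complement})\to 0\]
is exact, and the complement has cobordism $\Omega^*$ of $Q$-type cells. For cellular varieties we know $\Omega^*(\OGr(n))$ is free over $\mathbb{L}$ of rank $2^n$, with the sequence split-exact (rank count: $2^{n-1}+2^{n-1}$). One then checks that $z_n\cdot(\text{basis of }\OGr(n-1))$ and $j_{n*}(\text{basis})$ give the two halves, using the relations $j_n^*z_k=z_k$ and $j_{n*}(1)=z_n$ (up to the choice of convention). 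An alternative that is less delicate: the monomials reduce to the known Chow basis under $\Omega^*\to\CH^*$, and by a graded Nakayama argument for free $\mathbb{L}$-modules of finite rank, lifts of a $\mathbb{Z}$-basis of $\CH^*=\Omega^*\otimes_{\mathbb{L}}\mathbb{Z}$ form an $\mathbb{L}$-basis. I would use this Nakayama route.

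\textbf{Step 3: uniqueness.} Uniqueness follows from the triangular system. Subtracting two solutions gives $\sum_i d_{i+1}(z_{k+i}-z'_{k+i})=0$ for all $k$. Descending induction from $k=n$ gives $2(z_n-z_n')=0$, and $\Omega^*(\OGr(n))$ is torsion-free since it is free over $\mathbb{L}$, so $z_n=z_n'$, and so on downward.

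The main obstacle is Step 1: the exact push-forward computation identifying the coefficients as $d_{i+1}$. This requires computing $\pi_*\xi^m$ on $\mathbb{P}(\taut_n)$ in terms of the $z$'s via the quadric relation $h\cdot l_j=l_{j+1}$ and $h^{n}=$ an expression in the $l$'s involving $[2]_\Omega$. I would try first to reduce to the rank-one case, namely the quadric $Q$ itself, by pulling back along $\mathbb{P}(\taut_n)\to Q$.
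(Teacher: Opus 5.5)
\textbf{Verdict.} Your Steps 2 and 3 match the paper: graded Nakayama from the Chow basis $e_k$, and descending induction using $d_1=2$ and torsion-freeness. Step 1, however, is the actual content of the proposition, and it has a genuine gap. You do not carry it out, and the $z_k$ you define there do not satisfy the formula.

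\textbf{Why your $z_k$ are the wrong elements.} You set $z_k=\pi_*\mathrm{pr}^*[\mathbb{P}^{n-k}]=(p_2)_*\bigl(k_*(1)\cdot(lh^{k-1}\times 1)\bigr)$. That is, you start from the fundamental class $k_*(1)$ of $\mathbb{P}(\taut_n)\subset Q\times\OGr(n)$. The Chern class identity, however, only comes out of $k_*(\alpha)$ for the element $\alpha\in\Omega^0(\mathbb{P}(\taut_n))$ of Lemma~\ref{element}. In cobordism $\alpha\neq 1$: the class $j_*(1)$ of $\mathbb{P}(E')$ is the top Chern class of $\struct(1)\otimes E/E'$, which is built from formal sums $\xi+_{F_\Omega}e_i$ rather than the polynomial in (\ref{equation1}).

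Even with $\alpha$ in place of $1$, pairing with $lh^{k-1}$ does not extract the coefficient of $h^{n-k}\times 1$. Since $\deg(lh^a)=[\mathbb{P}^{n-1-a}]\neq 0$, the pairing gives $z_k+[\mathbb{P}^1]z_{k+1}+[\mathbb{P}^2]z_{k+2}+\cdots$.

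By your own uniqueness argument, any such discrepancy means the formula fails, and it already fails for $n=2$:
\begin{itemize}
\item $\OGr(2)\cong\mathbb{P}^3$, and your $z_2=\pi_*\mathrm{pr}^*[\mathrm{pt}]$ is the class of a line, namely $H^2$ with $H=c_1^\Omega(\struct(1))$.
\item In $\mathrm{K}_0(\mathbb{P}^3)$ one has $[\taut_2]=4[\struct(-1)]-[\struct]-[\struct(-2)]$. Put $y=c_1^\Omega(\struct(-1))=[-1]_{F_\Omega}(H)$.
\item The Whitney formula computes the usual second Chern class of $\taut_2$, which is $c_2^\Omega(\taut_2^\vee)$ in the paper's sign convention (the one forced by $\mathrm{pr}_{\CH}(z_k)=e_k$ being effective):
$6y^2-4yF_\Omega(y,y)+F_\Omega(y,y)^2=2y^2$.
\item So the formula forces $z_2=y^2=H^2-2d_2H^3\neq H^2$.
\end{itemize}
The same example shows that $j_{n*}(1)=z_n$, which you invoke in Step 2, is false.

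\textbf{The missing idea.} The coefficients $d_{i+1}$ do not come from computing $\pi_*(\xi^m)$ on $\mathbb{P}(\taut_n)$. The paper proceeds as follows.
\begin{enumerate}
\item Define $z_i$ as the coefficients in $k_*(\alpha)=\sum_i h^{n-i}\times z_i+\sum_i lh^i\times y_i$, taken with respect to the K\"unneth basis of $\Omega^*(Q\times\OGr(n))$ over $\Omega^*(\OGr(n))$.
\item Push forward along $m\times\id$, where $m\colon Q\hookrightarrow\mathbb{P}(V)$, staying over $\OGr(n)$. Since $Q$ is the zero locus of a section of $\struct(2)$, $m_*(1)=F_\Omega(\xi,\xi)=\sum_j d_j\xi^j$; also $m_*(l)=\xi^{n+1}$.
\item This gives $j_*(\alpha)=\sum_{i,j}d_jz_i\xi^{n+j-i}+\sum_i y_i\xi^{n+1+i}$. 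Compare coefficients of $\xi^{n+1-k}$ with $\sum_k(-1)^kc_k^\Omega(V\mathbbm{1}/\taut_n)\xi^{n+1-k}$, using the projective bundle formula for $\mathbb{P}(V)\times\OGr(n)$.
\item One still has $\mathrm{pr}_{\CH}(z_k)=e_k$, because $\alpha\mapsto 1$ in Chow, so your Nakayama step goes through.
\end{enumerate}

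\textbf{A further error.} $\taut_n^\perp\neq\taut_n$ here, since $\taut_n^\perp$ has rank $n+1$. To replace $V\mathbbm{1}/\taut_n$ by $\taut_n^\vee$ you must show that the line bundle $\taut_n^\perp/\taut_n$ is trivial. This holds because it is self-dual and $\Pic(\OGr(n))$ is torsion-free.
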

\begin{proof}
Since $\OGr(n)=\mathrm{Spin}_{2n+1}/P_n$, it follows from the Bruhat decomposition that the Chow motive of $\OGr(n)$ splits into a direct sum of Tate summands. By \cite[Corollary 2.8]{Vishik-Yagita} the $\Omega^*$-motive also splits. Hence, $\Omega^*(\OGr(n))$ is a free $\mathbb{L}$-module of rank $2^n$ and we have the K\"unneth isomorphism: 
$$
\Omega^*(Q)\otimes_{\mathbb{L}} \Omega^*(\OGr(n))\xrightarrow{\simeq}\Omega^*(Q\times \OGr(n))
$$
given by $x\otimes y\mapsto x\times y$. Thus, the ring $\Omega^*(Q\times \OGr(n))$ is a free $\Omega^*(\OGr(n))$-module with basis $\{h^i\times 1, lh^i\times 1\}$ for $0\le i\le n-1$ by \cite[\S 4.1]{Morava_SO}. Applying Lemma \ref{element} to $\taut_n\subset V\mathbbm{1}=V\times\OGr(n)$ we obtain an element $\alpha\in \Omega^0(\mathbb{P}(\taut_n))$ that satisfies formula \ref{equation1}. Now we take the pushforward of $\alpha$ along the closed embedding $k:\mathbb{P}(\taut_n)\hookrightarrow Q\times \OGr(n)$. It gives us a decomposition
$$
k_*(\alpha)=\sum\limits_{i=1}^n h^{n-i}\times z_i +\sum\limits_{i=0}^{n-1} lh^i\times y_i. 
$$
We claim that these $z_i$'s are our desired elements. First we prove that monomials in $z_i$'s are free generators of $\Omega^*(\OGr(n))$. Consider the canonical morphism of oriented cohomology theories $\mathrm{pr}_{\CH}\colon\Omega^*\to \CH^*$, which coincides with the quotient by $\mathbb{L}^{<0}\Omega^*$. We have $\mathrm{pr}_{\CH}(z_k)=e_k$ in the notation of \cite[pp. 357--358]{EKM}. Hence, by the corresponding fact for the Chow ring \cite[Theorem 86.12]{EKM} and the graded Nakayama lemma \cite[Lemma 5.5]{Morava_SO} the monomials $z_1^{i_1}z_2^{i_2}\dots z_n^{i_n}$ for $i_k\in \{0,1\}$ form a free basis of $\Omega^*(\OGr(n))$ over $\mathbb{L}$.

For the computation of $c_k^\Omega(\taut_n^\vee)$ in terms of our basis we denote by $m:Q\hookrightarrow \mathbb{P}(V)$ the obvious embedding and take the pushforward of $k_*(\alpha)$ along $m\times \id_{\OGr(n)}$. Using the projection formula, we obtain
$$
    (m\times \id_{\OGr(n)})_*(k_*(\alpha))=\sum\limits_{i=1}^n m_*(1)z_i\xi^{n-i}+\sum\limits_{i=0}^{n-1}m_*(l)y_i\xi^i,
$$
where $\xi$ denotes the first Chern class of $\struct(1)$ over $\mathbb{P}(V)$. Since $m_*(l)=m_*(i_*(1))$ is the class of $(n-1)$-dimension projective subspace in $\mathbb{P}(V)$, it follows that $m_*(l)=\xi^{n+1}$. By normalization identity (see \cite[\S 2.5]{Pan09}), we have $m_*(1)=c_1^\Omega(\mathcal{O}(2))=F_\Omega(\xi,\xi)=\sum_{i\geqslant 1} d_i\xi^i$. Observe that 
$$(m\times \id_{\OGr(n)})_*\circ k_*(\alpha)=((m\times \id_{\OGr(n)})\circ k)_*(\alpha)=j_*(\alpha),$$
where $j:\mathbb{P}(\taut_n)\to \mathbb{P}(V)\times \OGr(n)$ is the embedding of the projective subbundle. On the other hand, $\alpha$ was chosen so that $j_*(\alpha)$ satisfies formula \ref{equation1}. Summing up all of the above we obtain the following formula
$$
\sum\limits_{k=0}^{n+1} (-1)^kc_k^\Omega(V\mathbbm{1}/\taut_n)\xi^{n+1-k}=\sum\limits_{i=1}^n \sum\limits_{j\geqslant 1} d_j z_i\xi^{n+j-i} + \sum\limits_{i=0}^{n-1} y_i\xi^{n+1+i}.
$$
Using the projective bundle formula, we can compare coefficients to see that
$$c_k^\Omega(V\mathbbm{1}/\taut_n)=(-1)^k\sum_{i=0}^{n-k}d_{i+1}z_{k+i}$$
for $1\leq k\leq n$ and $c_{n+1}^\Omega(V\mathbbm{1}/\taut_n)=0$. It remains to show that the Chern classes of $V\mathbbm{1}/\taut_n$ and $\taut_n^\vee$ coincide. Consider the short exact sequence of vector bundles
$$
0\to V\mathbbm{1}/\taut_n^\perp\to V\mathbbm{1}/\taut_n\to \taut_n^\perp/\taut_n\to 0.
$$
By duality $V\mathbbm{1}/\taut_n^\perp\cong \taut_n^\vee$. The line bundle $\taut_n^\perp/\taut_n$ carries a nondegenerate quadratic form, hence is isomorphic to its dual. Since $\Pic(\OGr(n))=\CH^1(\OGr(n))$ is torsion-free, we conclude that $\taut_n^\perp/\taut_n\cong \struct_{\OGr(n)}$. Therefore, we have equalities of the total Chern classes
$c^\Omega(\taut_n^\vee)=c^\Omega(V\mathbbm{1}/\taut_n^\perp)=c^\Omega(V\mathbbm{1}/\taut_n)$.

To prove uniqueness, assume that there are elements $z_k'\in \Omega^i(\OGr(n))$ that satisfy the same conditions. We want to prove that $z_k=z_k'$ for $1\leq k\leq n$. Use descending induction on $k$. Suppose that $z_k=z_k'$ for $k>k_0$ (for $k_0=n$ this condition is trivially satisfied). Then $$(-1)^{k_0}(2z_{k_0}+d_2z_{k_0+1}+\dots+d_{n+1-k_0}z_n)=c_{k_0}^\Omega(\taut_n^\vee)=(-1)^{k_0}(2z_{k_0}'+d_{k_0+1}z_2'+\dots+d_{n+1-k_0}z_n').$$ The last sum is equal to $(-1)^{k_0}(2z_{k_0}'+d_{k_0+1}z_2+\dots+d_{n+1-k_0}z_n)$ by induction hypothesis. Hence, we obtain $2z_{k_0}=2z_{k_0}'$ and the result follows since $\Omega^*(\OGr(n))$ is torsion-free.
\end{proof}
\begin{remark}
Notice that $\mathrm{pr}_{\CH}(c_k^\Omega(\taut_n^\vee))=c_k^{\CH}(\taut_n^\vee)=(-1)^k2e_k$ differs by a sign from \cite[Proposition 86.13]{EKM}. The reason is the different conventions about the definition of Chern classes.
\end{remark}
The multiplicative generators constructed in the theorem are compatible with the embeddings of the split maximal orthogonal Grassmannians in the following sense.
\begin{lemma}\label{pullback1}
$$
j_n^*(z_k)=
    \begin{cases}
      z_k, & \text{if}\ k< n \\
      0, & \text{it}\ k=n
    \end{cases}
$$
\end{lemma}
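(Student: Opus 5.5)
The plan is to compare Chern classes of tautological bundles under $j_n$ and then run the same descending induction that gave uniqueness in Proposition \ref{prop:generators}. We use no geometry of the elements $z_k$ beyond the Chern class formula and the torsion-freeness of $\Omega^*(\OGr(n-1))$, which is a free $\mathbb{L}$-module.

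First we make $j_n$ explicit. Fix an isotropic line $L\subset V$. Then $L^\perp/L$ is a split quadratic space of dimension $2n-1$, and $\OGr(n-1)=\OGr(L^\perp/L)$. The map $j_n$ sends a maximal isotropic $U'\subset L^\perp/L$ to its preimage $U\subset L^\perp$, which is a maximal isotropic subspace of $V$ containing $L$. On the level of bundles this gives a short exact sequence
$$0\to L\mathbbm{1}\to j_n^*\taut_n\to \taut_{n-1}\to 0$$
on $\OGr(n-1)$, where $L\mathbbm{1}$ is the trivial line bundle. Dualizing and using the Whitney sum formula, we get $c^\Omega(j_n^*\taut_n^\vee)=c^\Omega(\taut_{n-1}^\vee)$. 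In particular $c_n^\Omega(j_n^*\taut_n^\vee)=0$, since $\taut_{n-1}$ has rank $n-1$.

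Next we pull back the formula of Proposition \ref{prop:generators} along $j_n$ and compare it with the same formula on $\OGr(n-1)$. For $1\le k\le n$ this gives
$$\sum_{i=0}^{n-k} d_{i+1}\, j_n^*(z_{k+i})=\sum_{i=0}^{n-1-k} d_{i+1}\, z_{k+i},$$
where the right-hand side is empty for $k=n$. Recall $d_1=2$. For $k=n$ the identity reads $2j_n^*(z_n)=0$, so $j_n^*(z_n)=0$ by torsion-freeness. We then argue by descending induction on $k$. Assume $j_n^*(z_m)=z_m$ for $k<m<n$ and $j_n^*(z_n)=0$. Substituting these into the identity for $k$ cancels every term except $2j_n^*(z_k)=2z_k$, and torsion-freeness gives $j_n^*(z_k)=z_k$.

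The only step requiring care is the identification of $j_n^*\taut_n$ as an extension of $\taut_{n-1}$ by a trivial line bundle. This amounts to checking that $j_n$ really is the morphism $U'\mapsto$ (preimage of $U'$ in $L^\perp$), and that the subbundle given by the fixed line $L$ is constant, hence trivial. The rest is formal bookkeeping with the Chern class formula.
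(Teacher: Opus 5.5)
Your proposal is correct and is essentially the paper's proof. In both, you pull back the Chern class formula of Proposition \ref{prop:generators} along $j_n$, compare it with the same formula on $\OGr(n-1)$ via $c^\Omega(j_n^*\taut_n^\vee)=c^\Omega(\taut_{n-1}^\vee)$, and run a descending induction that ends with $2j_n^*(z_k)=2z_k$ and torsion-freeness. The only cosmetic difference is that you get the Chern class identity from the extension $0\to L\mathbbm{1}\to j_n^*\taut_n\to\taut_{n-1}\to 0$ and the Whitney formula, whereas the paper asserts the splitting $j_n^*\taut_n^\vee\cong\taut_{n-1}^\vee\oplus\struct$.
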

\begin{proof}
The proof is similar to the proof of the uniqueness in the previous proposition. Use descending induction on $k$. Suppose that $j_n^*(z_k)=z_k$ for $k>k_0$ (here we set $z_n\in\Omega^*(\OGr(n-1))$ to be zero). Then $j_n^*(c_{k_0}^\Omega(\taut_n^\vee))$ is equal to $$j_n^*((-1)^{k_0}(2z_{k_0}+d_{k_0+1}z_2+\dots+d_{n+1-k_0}z_n))=(-1)^{k_0}(2j_n^*(z_{k_0})+d_{k_0+1}z_2+\dots+d_{n+1-k_0}z_n).$$ On the other hand, since $j_n^*(\taut_n^\vee)\cong \taut_{n-1}^\vee\oplus \struct_{\OGr(n)}$, we have
$$j_n^*(c_{k_0}^\Omega(\taut_n^\vee))=c_{k_0}^\Omega(\taut_{n-1}^\vee)=(-1)^{i_0}(2z_{k_0}+d_{k_0+1}z_2+\dots+d_{n-k_0}z_{n-1}).$$
It follows that $2j_n^*(z_{k_0})=2z_{k_0}$. We are done since $\Omega^*(\OGr(n-1))$ is torsion-free.
\end{proof}

\begin{lemma}\label{lemma:chern_relations}
    The Chern classes of the tautological bundle $\mathcal{T}_n$ and its dual satisfy the following relations for $1\le k\le n$:
    $$ \sum_{i=0}^{2k} c_{i}^\Omega(\taut_n)c_{2k-i}^\Omega(\taut_n^{\vee})=0. $$
\end{lemma}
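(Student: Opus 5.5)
We prove the relations by showing that the product of total Chern classes $c^\Omega(\taut_n)\,c^\Omega(\taut_n^\vee)$ equals the total Chern class of a bundle whose Chern classes vanish in positive degrees. Once this is established, the degree-$2k$ component of the product, which is exactly $\sum_{i=0}^{2k} c_{i}^\Omega(\taut_n)c_{2k-i}^\Omega(\taut_n^{\vee})$, vanishes for every $k\geq 1$, and in particular for $1\le k\le n$.

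Consider, as in the proof of Proposition \ref{prop:generators}, the filtration of the trivial bundle $V\mathbbm{1}=V\times\OGr(n)$ of rank $2n+1$:
$$ 0\subset \taut_n\subset \taut_n^\perp\subset V\mathbbm{1}. $$
The successive quotients are:
\begin{itemize}
\item $\taut_n$;
\item $\taut_n^\perp/\taut_n\cong\struct_{\OGr(n)}$, as shown in that proof using $\Pic(\OGr(n))$ torsion-free;
\item $V\mathbbm{1}/\taut_n^\perp\cong\taut_n^\vee$, by the duality induced by the nondegenerate bilinear form.
\end{itemize}
The Whitney sum formula holds for Chern classes in any oriented cohomology theory and is multiplicative along short exact sequences, since Chern classes are defined by Grothendieck's method and the splitting principle applies. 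Applying it twice gives
$$ 1=c^\Omega(V\mathbbm{1})=c^\Omega(\taut_n)\,c^\Omega(\struct)\,c^\Omega(\taut_n^\vee)=c^\Omega(\taut_n)\,c^\Omega(\taut_n^\vee). $$
Taking the homogeneous component of degree $2k$ yields the claim.

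The only point needing care is the additivity of total Chern classes along exact sequences in $\Omega^*$. I would justify it from the Levine--Morel axioms: Chern classes are defined via the projective bundle formula, and the Whitney formula follows from the splitting principle, for instance \cite[\S 4.1.7]{LevMor}. The identification $\taut_n^\perp/\taut_n\cong\struct$ was already established above, so nothing else needs to be checked.

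Note that the odd-degree components $\sum_{i=0}^{2k+1} c_{i}^\Omega(\taut_n)c_{2k+1-i}^\Omega(\taut_n^{\vee})$ also vanish by the same identity. The lemma only records the even-degree relations.
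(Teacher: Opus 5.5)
Your proposal is correct and is essentially the paper's argument: the paper applies the Whitney formula to $0\to\taut_n\to V\mathbbm{1}\to V\mathbbm{1}/\taut_n\to 0$ and uses $c^\Omega(V\mathbbm{1}/\taut_n)=c^\Omega(\taut_n^\vee)$ from the proof of Proposition~\ref{prop:generators}. That identity rests on the same filtration $\taut_n\subset\taut_n^\perp\subset V\mathbbm{1}$ and the same quotient identifications you use, and you order the filtration correctly, whereas the sequence displayed in that proof lists sub and quotient in the opposite order (harmless for total Chern classes).
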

\begin{proof}
    We have an equality of total Chern classes $c^\Omega(\taut_n^\vee)=c^\Omega(V\mathbbm{1}/\taut_n)$ by the proof of Proposition \ref{prop:generators}. The desired relations follows from the Whitney formula. 
\end{proof}
\begin{remark}
    In the Chow ring $\CH^*(\OGr(n))$, all relations can be obtained by dividing the above relations among the Chern classes by $4$. For algebraic cobordism, the same strategy theoretically works, but we do not have a closed formula for the Chern classes of the dual vector bundle.
\end{remark}

\section{Approximate relations in Brown--Peterson cohomology}
In principle the results of the previous section give an explicit computation of $\BPtr^*(\OGr(n))$ (or indeed $A^*(\OGr(n))$ for any free cohomology theory $A^*$) in terms of the multiplicative generators $z_i$ and relations between them.
In fact we have done this explicitly with the help of a computer for $n\le7$.
For larger $n$, it is useful to have to some approximations, that will be presented in the following.

\begin{lemma}
In $\mathrm{BP}^*(\OGr(n))$ (and consequently, also in $\BPtr^*(\OGr(n))$) the following relations hold for all $1\le k\le n$:
\begin{align*}
\begin{split}
    (-1)^{k+1}(c^*_k)^2&=\sum_{i=0}^{k-1}(-1)^i2c^*_{i,2k-i}-v_1\sum_{i=0}^k(-1)^iP_{1,0}(k-i)c^*_{i,2k+1-i}+v_1^2\sum_{i=0}^k(-1)^iP_{2,0}(k-i)c^*_{i,2k+2-i}\\
    &-v_1^3\sum_{i=0}^{k+1}(-1)^iP_{3,0}(k-i)c^*_{i,2k+3-i}-v_2\sum_{i=0}^{k+1}(-1)^iP_{0,1}(k-i)c^*_{i,2k+3-i}\\
    &+v_1^4\sum_{i=0}^{k+1}(-1)^iP_{4,0}(k-i)c^*_{i,2k+4-i}+v_1v_2\sum_{i=0}^{k+1}(-1)^iP_{1,1}(k-i)c^*_{i,2k+4-i}+O(v^5).
\end{split}
\end{align*}
The short notations used are $c^*_I=\prod_{i\in I}c_i^{\mathrm{BP}}(\taut_n^\vee)$ with $c^*_0=1$ and $c^*_i=0$ for $i>n$, and $O(v^5)$ denotes some sum of terms divisible by a product of powers of the $v_i$ of degree $\le-5$ (note that each $v_i$ has degree $1-2^i$).
The polynomials appearing in the coefficients are:
\begin{align*}
    P_{1,0}(t)&=2t+1,\\
    P_{2,0}(t)&=t^2+2t+1,\\
    P_{3,0}(t)&=\tfrac{2t^3+9t^2+25t+24}6,\\
    P_{0,1}(t)&=2t+3,\\
    P_{4,0}(t)&=\tfrac{t^4+8t^3+47t^2+124t+108}{12},\\
    P_{1,1}(t)&=2t^2+8t+8.
\end{align*}
\end{lemma}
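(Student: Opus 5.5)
The starting point is Lemma \ref{lemma:chern_relations} in $\mathrm{BP}^*(\OGr(n))$:
$$\sum_{i=0}^{2k}c_i^{\mathrm{BP}}(\taut_n)\,c_{2k-i}^{\mathrm{BP}}(\taut_n^\vee)=0.$$
The plan is to express the Chern classes of $\taut_n$ through those of $\taut_n^\vee$, modulo terms of $v$-degree $\le -5$. Since $\mathrm{BP}^*(\OGr(n))$ is a free module with Tate motive, we may compute universally. Use the splitting principle: write $c^{\mathrm{BP}}(\taut_n^\vee)=\prod_j(1+x_j)$, so that
$$c^{\mathrm{BP}}(\taut_n)=\prod_j\bigl(1+\bar x_j\bigr),$$
where $\bar x=[-1]_F(x)$ is the formal inverse for the $2$-typical formal group law.

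First compute $[-1]_F(x)$ up to $v$-degree $-4$. From the logarithm
$$\log(x)=x+\tfrac{v_1}{2}x^2+\bigl(\tfrac{v_1^3}{4}+\tfrac{v_2}{2}\bigr)x^4+\cdots,$$
in the normalization of Appendix \ref{appendix_fgl}, we get
$$[-1](x)=-x+\sum_{m\ge1}a_m x^{m+1},$$
where each $a_m\in\mathbb{Z}_{(2)}[v_1,v_2]$ is homogeneous of degree $-m$. Concretely:
\begin{align*}
a_1&=-v_1,\quad a_2=-v_1^2,\quad a_3\in\mathbb{Z}_{(2)}\,v_1^3+\mathbb{Z}_{(2)}\,v_2,\\
a_4&\in\mathbb{Z}_{(2)}\,v_1^4+\mathbb{Z}_{(2)}\,v_1v_2,
\end{align*}
and we truncate there.

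Next, expand $\prod_j(1-x_j+a_1x_j^2+a_2x_j^3+\dots)$ and collect the terms by monomials in the $v$'s. The coefficient of $v^\alpha$ in degree $k'$ is a symmetric polynomial in the $x_j$ of degree $k'+|\alpha|$. Such polynomials have the form $\sum_j x_j^{2}\cdot e_{\ldots}$ and similar. I would express them in elementary symmetric functions $c^*_m$, using Newton-type identities. For instance, the $v_1$-linear part of $c_{k'}(\taut_n)$ is
$$(-1)^{k'}\sum_{m}(\text{coefficient})\,c^*_{k'+1-m}\,p\text{-type terms},$$
and after simplification only products of two elementary symmetric functions appear.

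Substitute this into the relation of Lemma \ref{lemma:chern_relations}. The $v$-free part gives
$$\sum_i(-1)^ic^*_ic^*_{2k-i}.$$
By symmetry this equals $(-1)^k(c_k^*)^2+2\sum_{i<k}(-1)^ic^*_{i,2k-i}$, which produces the left side and the first sum. For each remaining $v$-monomial $v^\alpha$, I would verify that the resulting sum of triple products collapses to $\sum_i(-1)^iP_\alpha(k-i)\,c^*_{i,2k+|\alpha|-i}$. The coefficient polynomials $P_\alpha$ should come from counting convolutions of binomial-type coefficients.

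I expect this last step to be the main obstacle. It amounts to verifying identities of symmetric functions such as
$$\sum_i(-1)^i c_i(\taut)\,c_{2k-i}^*\Big|_{v_1}=\sum_i(-1)^iP_{1,0}(k-i)\,c^*_{i,2k+1-i},$$
where products of more than two $c^*$'s must cancel. My first attempt is a generating function argument: write
$$c_t(\taut)\,c_{-t}(\taut^\vee)=\prod_j(1+\bar x_jt)(1-x_jt)$$
and then take the even part. Failing a clean proof, the identities can be checked by computer, as was done for $n\le7$. This is legitimate because everything reduces to polynomial identities in finitely many formal variables of bounded degree, and these are stable in $n$.
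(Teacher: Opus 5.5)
There is a genuine gap. The relation in the lemma is \emph{not} an identity in the free polynomial ring on the elementary symmetric functions $c^*_m$, so the coefficient-by-coefficient check you propose (by hand, by generating functions, or by computer) would fail. The relation holds only in $\mathrm{BP}^*(\OGr(n))$, that is, modulo the ideal generated by the Chern relations of Lemma~\ref{lemma:chern_relations} themselves.

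This already happens for $k=1$ at first order. From $[-1]_F(t)=-t-v_1t^2+O(v^2)$ you get $(-1)^ic^{\mathrm{BP}}_i(\taut_n)=c^*_i+v_1\bigl(c^*_{1,i}-(i+1)c^*_{i+1}\bigr)+O(v^2)$, and the degree-$2$ relation becomes
\[
(c^*_1)^2=2c^*_2-3v_1c^*_3+3v_1c^*_{1,2}-v_1(c^*_1)^3+O(v^2).
\]
The lemma instead asserts $(c^*_1)^2=2c^*_2-3v_1c^*_3+v_1c^*_{1,2}+O(v^2)$. The discrepancy $v_1c^*_1\bigl(2c^*_2-(c^*_1)^2\bigr)$ is a nonzero polynomial for $n\ge 2$. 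It is $O(v^2)$ only because $2c^*_2-(c^*_1)^2=O(v)$ is the leading part of the very relation being proved.

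This is the general pattern: the triple products do not cancel, they assemble into $c^*_1$ times truncations of relations. The expansion also produces the squares $(c^*_{k+1})^2$ (at $v_1^2$) and $(c^*_{k+2})^2$ (at $v_1^4$ and $v_1v_2$), which the sums in the statement exclude. Removing them requires the relations for $k+1$ and $k+2$, and that substitution shifts the coefficients of the remaining quadratic terms. So the $P_{s,t}$ are not what a direct count of convolution coefficients would give.

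The missing idea is to work modulo the relations and refine recursively. The paper's proof does this in three steps.
\begin{enumerate}
\item It multiplies the $k$-th relation by the invertible element $P=1-v_1c^*_1+v_1^2c^*_2-\cdots$. This $P$ is chosen so that $(-1)^ic_iP$ is linear in the $c^*$, apart from a few terms $c^*_1c^*_m$ starting in $v$-degree $-3$.
\item The surviving cubic contributions enter only through $v_1^3(-T_{3,0}+v_1T_{4,0})$ and $v_2(-T_{0,1}+v_1T_{1,1})$. Here $-T_{3,0}+v_1T_{4,0}=-T_{0,1}+v_1T_{1,1}$ equals $c^*_1$ times an expression that is $O(v^2)$ by the $(k+1)$-st relation to first order. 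Hence these contributions are $O(v^5)$.
\item It eliminates $(c^*_{k+1})^2$ and $(c^*_{k+2})^2$ by substituting the relations for $k+1$ and $k+2$, first modulo $O(v^4)$ and then modulo $O(v^5)$.
\end{enumerate}

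Your computer fallback does not close the gap either. In the free ring it would be testing false identities. In the quotient ring it can only treat finitely many pairs $(n,k)$, whereas the lemma is uniform in $k$.
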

\begin{proof}
    The proof is a straightforward computation that uses obtained relations among the Chern classes and approximate formulas for the formal inverse power series associated with the formal group law $F_{\mathrm{BP}}$. Here are the details. We put
    \begin{align*}
    \begin{split}
        P&:=1-v_1c^*_1+v_1^2c^*_2-v_1^3(c^*_{1,1,1}-3c^*_{1,2}+4c^*_3)-v_2(c^*_{1,1,1}-3c^*_{1,2}+3c^*_3)\\
        &-v_1^4(c^*_{1,1,1,1}-5c^*_{1,1,2}+4c^*_{2,2}+5c^*_{1,3}-9c^*_4)-v_1v_2(c^*_{1,1,1,1}-5c^*_{1,1,2}+4c^*_{2,2}+5c^*_{1,3}-8c^*_4).
    \end{split}
    \end{align*}
    The chosen value of $P$ has no particular significance, except for being invertible and simplifying the next computation as much as possible. Denote by $x_i$ the Chern roots of the vector bundle $\mathcal{T}_n^\vee$ in $\mathrm{BP}$-cohomology, and by $[-1]_F$ the formal inverse corresponding to the formal group law of $\mathrm{BP}^*$. We also put $c_i=c_i^{\mathrm{BP}}(\taut_n)$.
    Using the approximation of the formal inverse from Appendix \ref{appendix_fgl} we obtain:
    \begin{align*}
        (-1)^ic_iP&=(-1)^i\sigma_i([-1]_F(x_1),\ldots,[-1]_F(x_n))P=R(x_1,\ldots,x_n)\cdot P+O(v^5)
    \end{align*}
    for the symmetric polynomial
    \begin{align*}
        R&:=m_{1^i}+v_1m_{2,1^{i-1}}+v_1^2(m_{3,1^{i-1}}+m_{2^2,1^{i-2}})+v_1^3(2m_{4,1^{i-1}}+m_{3,2,1^{i-2}}+m_{2^3,1^{i-3}})+v_2m_{4,1^{i-1}}\\
        &+v_1^4(4m_{5,1^{i-1}}+2m_{4,2,1^{i-2}}+m_{3^2,1^{i-2}}+m_{3,2^2,1^{i-3}}+m_{2^4,1^{i-4}})+v_1v_2(3m_{5,1^{i-1}}+m_{4,2,1^{i-2}}),
    \end{align*}
    where $m_\lambda$ is the monomial symmetric polynomial associated with an integer partition $\lambda$ (see Appendix \ref{appendix_sym_pol} for the definition). Simplification using the expressions of $m_\lambda$'s in terms of $\sigma_\lambda$'s from Appendix \ref{appendix_sym_pol}, and substituting back $c_i^*=\sigma_i(x_1,\ldots,x_n)$, shows that the element $(-1)^ic_iP$ has the following approximation
    \begin{align*}
    \begin{split}
        c^*_i-v_1(i+1)c^*_{i+1}+v_1^2\tfrac{i^2+3i+2}2c^*_{i+2}-v_1^3(-c^*_{1,i+2}+\tfrac{i^3+6i^2+17i+24}6c^*_{i+3})-v_2(-c^*_{1,i+2}+(i+3)c^*_{i+3})&\\
       +v_1^4(-(i+2)c^*_{1,i+3}+\tfrac{i^4+10i^3+59i^2+194i+216}{24}c^*_{i+4})+v_1v_2(-(i+2)c^*_{1,i+3}+(i^2+6i+8)c^*_{i+4})+O(v^5).&
    \end{split}
    \end{align*}
    After multiplying the $k$-th relation on Chern classes from Lemma \ref{lemma:chern_relations} by $P$ and applying the above formulas for $(-1)^ic_iP$, we obtain the expression
    \begin{align}\label{equation_ck^2_QT} 
    \begin{split}
    (-1)^{k+1}(c_k^*)^2&=Q_{0,0}-v_1Q_{1,0}+v_1^2Q_{2,0}-v_1^3(Q_{3,0}+T_{3,0})
    -v_2(Q_{0,1}+T_{0,1})\\
    &+v_1^4(Q_{4,0}+T_{4,0})+
    v_1v_2(Q_{1,1}+T_{1,1})+O(v^5).
    \end{split}
    \end{align}
    Here $Q_{s,t}$ and $T_{s,t}$ are polynomials in the dual Chern classes, where each $Q_{s,t}$ consists of monomials of two Chern classes and each $T_{s,t}$ consists of monomials of three Chern classes. A straightforward computation shows that $Q_{0,0}$, $Q_{1,0}$, and $Q_{0,1}$ already have the desired form, while the remaining $Q_{s,t}$ are related to the $P_{s,t}$ as follows:
    \begin{align*}
        Q_{2,0}&=\sum_{i=0}^k(-1)^i\left(P_{2,0}(k-i)+\tfrac{(k^2+k)}{2}\cdot 2\right)c^*_{i,2k+2-i}+(-1)^{k-1}\tfrac{k^2+k}2(c^*_{k+1})^2\\
        Q_{3,0}&=\sum_{i=0}^{k+1}(-1)^i\left(P_{3,0}(k-i)+\tfrac{k^2+k}2P_{1,0}(k+1-i)\right)c^*_{i,2k+3-i}\\
        Q_{4,0}&=\sum_{i=0}^{k+1}(-1)^i\left(P_{4,0}(k-i)+\tfrac{k^2+k}{2}P_{2,0}(k+1-i)+\tfrac{k^4+2k^3+23k^2+46k}{24}\cdot 2\right)c^*_{i,2k+4-i}\\
        &+(-1)^{k-2}\tfrac{k^4+2k^3+23k^2+46k}{24}(c^*_{k+2})^2\\
        Q_{1,1}&=\sum_{i=0}^{k+1}(-1)^i\left(P_{1,1}(k-i)+(k^2+2k)\cdot 2\right)c^*_{i,2k+4-i}+(-1)^{k-2}(k^2+2k)(c^*_{k+2})^2.
    \end{align*}
    In turn, the cubic polynomials $T_{s,t}$ are given by the following formulas
    \begin{align*}
        \begin{split}
            T_{3,0}&=T_{0,1}=-c^*_1\sum_{i=0}^{2k+2}(-1)^ic^*_{i,2k+2-i},\\
            T_{4,0}&=T_{1,1}=c^*_1\sum_{i=0}^{2k+3}(-1)^i(i-1)c^*_{i,2k+3-i}.
        \end{split}
    \end{align*}
    We now proceed to show that all undesired summands cancel.
    By truncating the relation \ref{equation_ck^2_QT} to $O(v^2)$ and reordering, one obtains:
    \begin{align*}
        \sum_{i=0}^{2k+2}(-1)^ic^*_{i,2k+2-i}+v_1\sum_{i=0}^{2k+3}(-1)^i(i-1)c^*_{i,2k+3-i}=O(v^2)
    \end{align*}
    Hence, $v_1^3(-T_{3,0}+v_1T_{4,0})$ and $v_2(-T_{0,1}+v_1T_{1,1})$ are both $O(v^5)$ and all “cubic” terms in relation \ref{equation_ck^2_QT} cancel, leaving only the “quadratic” ones. To finish the proof, first truncate to $O(v^4)$ and substitute the expression for $(c^*_{k+1})^2$; one arrives at the result that the desired relation holds up to $O(v^4)$.
    Finally, this allows substituting the expressions for $(c^*_{k+1})^2$ and $(c^*_{k+2})^2$, finishing the proof.
\end{proof}
Now we turn to the relations among the $z_i$. According to Proposition \ref{prop:generators},  they should provide expressions for the squares of these generators in terms of the monomials $z_{1}^{i_1}z_2^{i_2}\dots z_n^{i_n}$, where $i_j\in\{0,1\}$. Below we provide approximations to such relations using the previous proposition and the formulas for $c_i^*$.
\begin{proposition}\label{prop:relations_z_i}
In $\mathrm{BP}^*(\OGr(n))$, the element $(-1)^{k+1}z_k^2$ has the following approximation for $k\geq 3$:
\begin{align*}
\begin{split}
    &z_{2k}+\sum_{i=1}^{k-1}(-1)^i2z_{i,2k-i}+v_1\left(\sum_{i=0}^k(-1)^iR_{1,0}(k,i)z_{i,2k+1-i}\right)+v_1^2\left(\sum_{i=0}^k(-1)^iR_{2,0}(k,i)z_{i,2k+2-i}\right)\\
    +&v_1^3\left(\sum_{i=0}^{k+1}(-1)^iR_{3,0}(k,i)z_{i,2k+3-i}\right)+v_2\left(\sum_{i=0}^{k+1}(-1)^iR_{0,1}(k,i)z_{i,2k+3-i}\right)\\
    +&v_1^4\left(\sum_{i=0}^{k+1}(-1)^iR_{4,0}(k,i)z_{i,2k+4-i}\right)+v_1v_2\left(\sum_{i=0}^{k+1}(-1)^iR_{1,1}(k,i)z_{i,2k+4-i}\right)+O(v^5),
\end{split}
\end{align*}
where $z_0=1$, and the integral-valued functions $R_{s,t}(k,i)=P_{s,t}(k-i)$ for $i>s+3t$, and the exceptional values are as follows:
\setlength{\LTpre}{0pt}
\setlength{\LTpost}{0pt}
\begin{center}
    \renewcommand{\arraystretch}{1.5}
    \begin{longtable}{c|c|c|c|c|c|c}
     & $(1,0)$ & $(2,0)$ & $(3,0)$ & $(0,1)$ & $(4,0)$ & $(1,1)$ \\
    \hline
    $0$ & $k$ & $\tfrac{k^2+k-2}{2}$ & $\tfrac{k^3+3k^2+2k-24}{6}$ & $k-2$ & $\tfrac{k^4+6k^3+23k^2-54k-504}{24}$ & $k^2-21$ \\
    $1$ & $2k-2$ & $k^2-k-2$ & $\tfrac{k^3-k-24}3$ & $2k-6$ & $\tfrac{k^4+2k^3+11k^2-86k-432}{12}$ & $2k^2-4k-40$ \\
    $2$ & \cellcolor{lightgray} & $k^2-2k-1$ & $\tfrac{2k^3-3k^2+k-54}6$ & $2k-8$ & $\tfrac{k^4+11k^2-108k-384}{12}$ & $2k^2-7k-37$ \\
    $3$ & \cellcolor{lightgray} & \cellcolor{lightgray} & $\tfrac{2k^3-9k^2+25k-72}6$ & $2k-10$ & $\tfrac{k^4-4k^3+29k^2-146k-288}{12}$ & $2k^2-11k-28$ \\
    $4$ & \cellcolor{lightgray} & \cellcolor{lightgray} & \cellcolor{lightgray} & \cellcolor{lightgray} & $\tfrac{k^4-8k^3+47k^2-124k-204}{12}$ & $2k^2-8k-22$\\
    \caption{Values of $R_{s,t}(k,i)$ for $i\leq s+3t$ and $3\leq k\leq n$. Columns correspond to pairs $(s,t)$ and the rows to values of $i$.} 
\end{longtable}
\end{center}
In addition, the squares of the first two generators are given by:
\begin{align*}
    \begin{split}
        z_1^2&=z_2+v_1z_3+v_1^2(2z_4-2z_{1,3})+v_1^3(z_5+4z_{1,4}+z_{2,3})+v_2(-z_5+4z_{1,4}+z_{2,3})\\
        &+v_1^4(16z_6-26z_{1,5})+v_1v_2(17z_6-32z_{1,5}+2z_{2,4})+O(v^5),
    \end{split} \\
    \begin{split}
        -z_2^2&=z_4-2z_{1,3}+v_1(2z_5-2z_{1,4}+z_{2,3})+v_1^2(2z_6-z_{2,4})+v_1^3(6z_{1,6}-8z_{2,5}+7z_{3,4})\\&+v_2(2z_{1,6}-4z_{2,5}+6z_{3,4})+v_1^4(7z_8-8z_{1,7}+7z_{2,6}-12z_{3,5})\\&+v_1v_2(13z_8-20z_{1,7}+17z_{2,6}-18z_{3,5})+O(v^5).
    \end{split}
\end{align*}
\end{proposition}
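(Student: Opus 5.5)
The plan is to substitute the expressions for the Chern classes from Proposition \ref{prop:generators} into the approximate relation of the previous lemma. Then I solve for $z_k^2$ by descending induction on $k$, working modulo $O(v^5)$ throughout.

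First, specialize Proposition \ref{prop:generators} to $\mathrm{BP}^*$. The coefficients $d_i$ of $F_{\mathrm{BP}}(t,t)$ are computed to order $O(v^5)$ using the logarithm from Appendix \ref{appendix_fgl}: $d_1=2$, $d_2=v_1$ up to sign conventions, and further terms involving $v_1^2$, $v_1^3$, $v_2$, $v_1^4$ and $v_1v_2$. This gives
$$c^*_k=(-1)^k\bigl(2z_k+d_2z_{k+1}+d_3z_{k+2}+\dots\bigr).$$

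Substituting into the lemma, the left-hand side becomes $(-1)^{k+1}(c^*_k)^2=(-1)^{k+1}\bigl(4z_k^2+4d_2z_kz_{k+1}+\dots\bigr)$. The right-hand side is a linear combination of products $c^*_ic^*_j$, and each of these expands into products $z_az_b$.

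Next, bring everything to the square-free basis. The products $z_az_b$ with $a\neq b$ are already basis monomials $z_{a,b}$. Any squares $z_a^2$ that arise satisfy $a>k$: on the left they come from $d_{j}^2z_{k+j-1}^2$, and on the right from $c^*_{i}c^*_{2k+\ell-i}$ with $i=2k+\ell-i$. By descending induction on $k$ these squares are already known modulo $O(v^5)$, and they only enter multiplied by a positive power of $v$. The base case is $k$ near $n$, where $z_a=0$ for $a>n$. For the same reason the term $4d_2z_kz_{k+1}$ and its relatives, although square-free, must be moved to the right-hand side.

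After these substitutions we obtain $4(-1)^{k+1}z_k^2$ equal to an explicit $\mathbb{Z}_{(2)}[v_1,v_2]$-combination of the $z_{i,j}$ modulo $O(v^5)$. Dividing by $4$ is legitimate because $\mathrm{BP}^*(\OGr(n))$ is free, hence torsion-free, and the coefficients divide by $4$ once everything is collected. For example, the $v^0$ part is $4z_{2k}+\sum 8(-1)^iz_{i,2k-i}$, reproducing the Chow relation.

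The coefficients $R_{s,t}(k,i)$ are then read off degree by degree in $v$. Only finitely many correction terms are possible: those involving $z_{k+1}^2$ and $z_{k+2}^2$, and those from $d_jz_{i+j-1}$ with small $i$, where the shifted indices collide with the range $0\le i\le s+3t$. This is why the formula agrees with $P_{s,t}(k-i)$ for $i>s+3t$, while the rows $i\le s+3t$ of the table are exceptional. The boundary terms with index $i=0$ (via $z_0=1$, using $c^*_0=1\neq 2z_0$) and small $i$ get extra contributions because $c^*_i$ for small $i$ overlaps with the lower terms.

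The formula is stated only for $k\geq 3$, because for $k=1,2$ the expansions of $z_1^2$ and $z_2^2$ involve collisions, such as $z_{k+1}$ coinciding with indices in the sum, that break the uniform pattern. These two cases are computed separately by the same substitution. Here I would verify the answer by machine or by hand, using the lemma with $k=1,2$ and the already established relations for $z_3^2,z_4^2,\dots$.

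The main obstacle is the bookkeeping of the exceptional coefficients in the table: tracking every way a $d_j$-shift or a reinserted square $z_{k+1}^2$, $z_{k+2}^2$ contributes to low-index terms, and confirming that all entries are integral after division by $4$. I would do this by writing a general symbolic expansion in $k$ with $i$ free, isolating the contributions that depend on $i\le 4$, and cross-checking against the explicit computer computations for $n\le 7$ mentioned earlier.
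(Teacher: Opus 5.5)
Your proposal is correct and is essentially the paper's argument. Both substitute the linear relation between the $c^*_j$ and the $z_j$ from Proposition~\ref{prop:generators} into the preceding lemma, remove the squares $z_{k+1}^2,z_{k+2}^2$ by feeding the relation back in (the paper's ``refinement'', your descending induction), and divide by $4$ using torsion-freeness.

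The only difference is bookkeeping. The paper inverts to $z_k=(-1)^k\tfrac12c^*_k+\tfrac{v_1}{2}z_{k+1}-v_1^2z_{k+2}+\dots$ and extends $z'_k$ formally to $k\le 0$. The relation then first appears with coefficients exactly $P_{s,t}(k-i)$, and the table entries come only from eliminating $z'_0,\dots,z'_{-4}$. The separate treatment of $k=1,2$ also comes from there, since $z'_0,\dots,z'_{-4}$ produce the squares $z_2^2,z_3^2,z_4^2$. Your forward substitution must additionally check that near-diagonal terms such as $-4d_2z_{k,k+1}$ (note $d_2=-v_1$) cancel against the right-hand side. They do, as one sees directly at order $v_1$.
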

\begin{proof}
    For $k>0$, solving the equation $c^*_k=(-1)^k\sum_{i=0}^\infty d_{i+1}z_{k+i}$ gives:
    \begin{align*}
    \begin{split}
        z_k':=z_k&=(-1)^k\tfrac12c^*_k+\tfrac{v_1}2z_{k+1}-v_1^2z_{k+2}+(4v_1^3+\tfrac72v_2)z_{k+3}-(13v_1^4+15v_1v_2)z_{k+4}+O(v^5).
    \end{split}
    \end{align*}
    For $k\le0$, we take this formula as a (recursive) definition of $z_k'$, where $c_i=0$ for $i<0$.
    Note that $z_0'\ne z_0=1$ even though both are defined, and $z_k'$ are defined and nonzero even for $k<0$ (although it is unclear whether they have a natural interpretation).
    Plugging this expression into the term $(-1)^{k+1}z_k'^2$, expanding using the multinomial formula and the relation involving $(c^*_k)^2$ of the preceding lemma, dividing by 4, and finally expanding using $c^*_k=(-1)^k\sum_{i=0}^\infty d_{i+1}z_{k+i}'$ again one obtains:
    \begin{align*}
    \begin{split}
        (-1)^{k+1}z_k'^2&=\sum_{i=-\infty}^{k-1}2z_{i,2k-i}'+v_1\sum_{i=-\infty}^k(-1)^iP_{1,0}(k-i)z_{i,2k+1-i}'\\
        &+v_1^2\left(\sum_{i=-\infty}^k(-1)^i\left(P_{2,0}(k-i)+\tfrac{9}{4}\cdot2\right)z_{i,2k+2-i}'+(-1)^{k+1}\tfrac94z_{k+1}'^2\right)\\
        &+v_1^3\sum_{i=-\infty}^{k+1}(-1)^i\left(P_{3,0}(k-i)+\tfrac{9}{4}P_{1,0}(k+1-i)\right)z_{i,2k+3-i}'\\
        &+v_2\sum_{i=-\infty}^{k+1}(-1)^iP_{0,1}(k-i)z_{i,2k+3-i}'\\
        &+v_1^4\left(\sum_{i=-\infty}^{k+1}(-1)^i\left(P_{4,0}(k-i)+\tfrac{9}{4}P_{2,0}(k+1-i)+36\cdot2\right)z_{i,2k+4-i}'+(-1)^{k+2}36z_{k+2}'^2\right)\\
        &+v_1v_2\left(\sum_{i=-\infty}^{k+1}(-1)^i\left(P_{1,1}(k-i)+\tfrac{75}{2}\cdot2\right)z_{i,2k+4-i}'+(-1)^{k+2}\tfrac{75}2z_{k+2}'^2\right)+O(v^5).
    \end{split}
    \end{align*}
    Using the analogous strategy of successive truncation and “refinement” by substitution of $z_{k+1}'^2$ and $z_{k+2}'^2$ as in the proof of the preceding lemma, one arrives at the following relation that does not contain squares any more:
    \begin{align*}
    \begin{split}
        (-1)^{k+1}z_k'^2&=\sum_{i=-\infty}^{k-1}2z_{i,2k-i}'+v_1\sum_{i=-\infty}^k(-1)^iP_{1,0}(k-i)z_{i,2k+1-i}'+v_1^2\sum_{i=-\infty}^k(-1)^iP_{2,0}(k-i)z_{i,2k+2-i}'\\
        &+v_1^3\sum_{i=-\infty}^{k+1}(-1)^iP_{3,0}(k-i)z_{i,2k+3-i}'+v_2\sum_{i=-\infty}^{k+1}(-1)^iP_{0,1}(k-i)z_{i,2k+3-i}'\\
        &+v_1^4\sum_{i=-\infty}^{k+1}(-1)^iP_{4,0}(k-i)z_{i,2k+4-i}'+v_1v_2\sum_{i=-\infty}^{k+1}(-1)^iP_{1,1}(k-i)z_{i,2k+4-i}'+O(v^5).
    \end{split}
    \end{align*}
    Note that this formula holds for all positive $k$ including $k=1,2$. We now eliminate the infinite sums. A straightforward computation from the definition shows:
    \begin{align*}
        z_0'&=\tfrac{1}{2}+\tfrac{v_1}{2}z_1-v_1^2z_2+(4v_1^3+\tfrac{7}{2}v_2)z_3-(13v_1^4+15v_1v_2)z_4+O(v^5),\\
        z_{-1}'&=\tfrac{v_1}{4}-\tfrac{3}{4}v_1^2z_1+(\tfrac{7}{2}v_1^3+\tfrac{7}{2}v_2)z_2-(11v_1^4+\tfrac{53}{4}v_1v_2)z_3+O(v^5),\\
        z_{-2}'&=-\tfrac{3}{8}v_1^2+(\tfrac{25}{8}v_1^3+\tfrac{7}{2}v_2)z_1-(\tfrac{41}{4}v_1^4+\tfrac{53}{4}v_1v_2)z_2+O(v^5),\\
        z_{-3}'&=(\tfrac{25}{16}v_1^3+\tfrac{7}{4}v_2)-(\tfrac{139}{16}v_1^4+\tfrac{23}{2}v_1v_2)z_1+O(v^5),\\
        z_{-4}'&=-(\tfrac{139}{32}v_1^4+\tfrac{23}{4}v_1v_2)+O(v^5),\\
        z_i'&=O(v^5)\ \text{ for }\ i\le-5.
    \end{align*}
    Substituting these relations one arrives the desired result for $k\ge3$ immediately.

    For $k=1,2$, starting from $O(v^2)$ there appear summands $z_i^2$ for $2\le i\le4$.
    Using the known relations and the refinement argument once more, the claimed result follows here as well.
\end{proof}

\section{Search for irrational \texorpdfstring{$v_2$}{v2}-torsion}\label{section:6}
Throughout this section $\tilde{q}_{\mathrm{gen}}$ is a generic quadratic form with trivial discriminant and trivial Clifford invariant of dimension $2n+1$. In other words, this is a quadratic form that corresponds to a generic $\mathrm{Spin}_{2n+1}$-torsor.

\begin{definition}
    Let $A^*$ be an oriented cohomology theory satisfying the global prerequisites of \cite{czz-equivariant}; for our purposes it is enough to know that $A^*$ being a free theory with $2\in A^*(F)$ regular is sufficient.
    Let again $x_i$ denote the Chern roots of the vector bundle $\taut_n^\vee$ over $\OGr(n)$.
    As in \cite[\S 8.2]{km-canonical-p-dimension}, denote by $y$ the character satisfying $2y=x_1+\ldots+x_n$.
    By \cite[Theorem 9.1]{czz-equivariant} and the obvious symmetry, the Chern class $c_1^A(y)\in A^1(\mathrm{Spin}_{2n+1}/B)$ is contained in the image of the canonical injection (pullback of the projection) $A^*(\OGr(n))\rightarrow A^*(\mathrm{Spin}_{2n+1}/B)$.
    Define $u\in A^1(\OGr(n))$ as its unique preimage.
\end{definition}

Directly from the definition it is possible to compute the following approximation of the element $u$.
We will need this in our final computations.
\begin{lemma}
    The following identity holds in $\mathrm{BP}^1(\OGr(n))$ (and consequently, $\BPtr^1(\OGr(n))$):
    \begin{align*}
        u&=-z_1-v_1^2z_{1,2}+v_1^3(5z_4-z_{1,3})+v_2(4z_4-z_{1,3})+v_1^4(-4z_5-6z_{1,4}-z_{2,3})\\
        &+v_1v_2(-6z_5-8z_{1,4}+z_{2,3})+O(v^5).
    \end{align*}
\end{lemma}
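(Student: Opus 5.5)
The plan is to characterize $u$ by an identity among Chern classes that can be expanded in the basis of Proposition \ref{prop:generators}, and then solve that identity for $u$ degree by degree in the $v_i$. By definition, the pullback of $u$ to $\mathrm{Spin}_{2n+1}/B$ is $c_1^{\mathrm{BP}}(y)$, where $2y=x_1+\dots+x_n$ as characters. On the level of line bundles, $\mathcal{L}_y^{\otimes 2}\cong\det\taut_n^\vee$, so
$$[2]_F(u)=c_1^{\mathrm{BP}}(\det\taut_n^\vee)=F(x_1,\dots,x_n),$$
where $F(x_1,\dots,x_n)$ denotes the iterated formal sum in the roots. Since the pullback $\mathrm{BP}^*(\OGr(n))\to\mathrm{BP}^*(\mathrm{Spin}_{2n+1}/B)$ is injective, this identity already holds in $\mathrm{BP}^*(\OGr(n))$. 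The right-hand side is symmetric in the $x_i$, so it can be written as a power series in the $c_i^*=c_i^{\mathrm{BP}}(\taut_n^\vee)$.

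The key steps, in order, are as follows.

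\begin{enumerate}
\item Using the logarithm approximation of $F_{\mathrm{BP}}$ from Appendix \ref{appendix_fgl}, expand the formal sum $F(x_1,\dots,x_n)$ modulo $O(v^5)$. This gives $m_1$ plus $v$-multiples of monomial symmetric polynomials $m_\lambda$, including terms such as $-v_1 m_{1,1}$ and so on.
\item Convert each $m_\lambda$ into elementary symmetric polynomials using Appendix \ref{appendix_sym_pol}. This expresses the right-hand side through $c_1^*,c_2^*,\dots$ and their products.
\item Substitute $c_k^*=(-1)^k\sum_i d_{i+1}z_{k+i}$, i.e.\ $c_k^*=(-1)^k(2z_k+d_2z_{k+1}+\cdots)$ with $d_i$ read off from $F(t,t)$ in $\mathrm{BP}$.
\item Reduce all products to square-free monomials using the relations of Proposition \ref{prop:relations_z_i}, in particular the formulas for $z_1^2$ and $z_2^2$.
\item On the left, write $[2]_F(u)=2u+a_2u^2+a_3u^3+\cdots$, using the known low-degree coefficients $a_i$ of the $2$-series.
\end{enumerate}

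To solve, make the ansatz $u=\sum_\mu \lambda_\mu z_\mu$ with unknown coefficients in $\mathbb{Z}_{(2)}[v_1,v_2]$. Only monomials $v^\alpha z_\mu$ of total degree $1$ and $v$-degree $\ge -4$ occur, namely exactly those listed in the statement. Solve for these coefficients inductively in the $v$-adic order. At order $v^0$ we get $2u=c_1^*+\dots$ with $c_1^*=-2z_1-d_2z_2-\cdots$, which gives $u\equiv -z_1$. At each further order, the powers $u^2,u^3,\dots$ are computed from the lower-order approximation of $u$ via the square relations. We then divide by $2$, which is legitimate because $\mathrm{BP}^*(\OGr(n))$ is a free module over a torsion-free ring, and the solution is unique for the same reason. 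The result should match the stated formula.

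The main obstacle is the bookkeeping at orders $v_1^4$ and $v_1v_2$. There the contributions from the $2$-series coefficients, the $d_i$, the symmetric-function conversion, and the approximate square relations all interact, and each must be carried to the correct precision; for example, $a_2u^2$ needs $u$ up to $O(v^3)$. I would organize this as a computer-checked expansion in the same framework as Proposition \ref{prop:relations_z_i}. One consistency check is that every coefficient becomes integral after dividing by $2$, which is forced by the existence of $u$.
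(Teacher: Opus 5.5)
Your proposal is correct and is essentially the paper's argument. Both start from $[2]_F(u)=c_1(x_1)+_F\cdots+_Fc_1(x_n)$, expand through the logarithm into polynomials in the $c_i^*$, substitute Proposition \ref{prop:generators}, and reduce by computer using Proposition \ref{prop:relations_z_i}. The differences are small: the paper skips your order-by-order solve by writing $u=l^{-1}\bigl(\tfrac12\sum_j l(c_1(x_j))\bigr)$ directly via power sums, and it runs the reduction for $n=5$ and notes that the result is the same for all $n\ge5$. Also, your degree count additionally allows $v_1z_2$ and $v_1^2z_3$, whose coefficients come out to zero.
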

\begin{proof}
    Denote by $F$ the formal group law of $\mathrm{BP}$ for brevity. It is immediate from the definition of $u\in\mathrm{BP}^1(\OGr(n))$ that $[2]_F(u)=c_1(x_1)+_F\ldots+_Fc_1(x_n)$.
    In other words, if
    $$l(t)=\sum_{i=1}^\infty l_it^i\in\mathbb{Q}[v_1,v_2,\dots][[t]]$$
    is the logarithm of the formal group law of $\mathrm{BP}^*$, we have:
    \begin{align*}
        \begin{split}
            u&=l^{-1}\left(\tfrac{l(c_1(x_1))+\ldots+l(c_1(x_n))}2\right)=l^{-1}\left(\tfrac12\sum_{i=1}^\infty l_i\sum_{j=1}^nc_1(x_j)^i\right)\\
            &=l^{-1}\left(\tfrac12(l_1p_1(c_1(x_1),\ldots,c_1(x_n))+\ldots+l_5p_5(c_1(x_1),\ldots,c_1(x_n)))\right)+O(v^5),
        \end{split}
    \end{align*}
    where $p_i$'s are the power-sum symmetric polynomials.
    Expressing these symmetric polynomials in terms of the elementary symmetric polynomials we obtain
    \begin{align*}
        u=l^{-1}(\tfrac12(l_1c_1^*+\ldots+l_5((c_1^*)^5-5(c_1^*)^4c_2^*+5c_1^*(c_2^*)^2+5(c_1^*)^2c_3^*-5c_2^*c_3^*-5c_1^*c_4^*+5c_5^*))+O(v^5).
    \end{align*}
    Plugging in the explicit expressions of the logarithm and the exponential (see Appendix \ref{appendix_fgl}), and simplifying the terms fully using Proposition \ref{prop:generators} and Proposition \ref{prop:relations_z_i}, one arrives at the given result.
    Explicitly, the simplifications are performed using the computer for $n=5$ (see \S \ref{subsection_code} for the link to the code and output files), and one observes that the resulting expression has to look the same for all $n\ge5$.
\end{proof}

\begin{lemma}\label{lemma:rat_generators}
    Let $A^*$ be a free oriented cohomology theory and let $n$ be a natural number. Then the $A^*(F)$-submodule of rational elements $\overline{A}^*(\OGr(\tilde{q}_{\mathrm{gen}}))\subset A^*(\OGr(n))$ is generated by the elements 
    $$ u^k(c^*_{2})^{i_2}(c^*_{3})^{i_3}\dots(c^*_{n})^{i_n}, $$
    where the indexes run over $0\leq i_j\leq 1$ and $0\leq k\leq \mathrm{dim}(\OGr(n))$.
\end{lemma}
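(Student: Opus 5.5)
Both inclusions are natural to prove separately. The elements listed are rational, and they generate everything rational.

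\emph{Rationality.} The Chern classes $c^*_j=c_j^A(\taut_n^\vee)$ are rational, since the tautological bundle is defined over $F$ on $\OGr(\tilde q_{\mathrm{gen}})$. For $u$, I would argue as follows. The Clifford invariant of $\tilde q_{\mathrm{gen}}$ is trivial, so the even Clifford algebra is split. Hence the half-spin (square-root) line bundle $\mathcal{O}(y)$ on $\OGr(n)$, whose $c_1^A$ equals $u$ after pullback to $\mathrm{Spin}_{2n+1}/B$, descends to $\OGr(\tilde q_{\mathrm{gen}})$. Equivalently, $\Pic(\OGr(\tilde q_{\mathrm{gen}}))\to\Pic(\OGr(n))$ is surjective: the obstruction is the Tits algebra, which is the Clifford invariant. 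Thus $u$ is rational, and so are all products.

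\emph{Generation.} I would use the generic torsor $E$ with $E/B\to\OGr(\tilde q_{\mathrm{gen}})$. By Karpenko's/Panin's theory, together with the Levine--Morel surjection from the equivariant theory, there is a surjection
$$A^*_{\mathrm{Spin}_{2n+1}}(\mathrm{Spin}_{2n+1}/P_n)=A^*_{P_n}(\mathrm{pt})\twoheadrightarrow A^*(E/P_n)$$
for generic torsors: $E$ is the generic fibre of $GL_N\to GL_N/G$, and localization applies. Composing with restriction, the rational elements are the image of $A^*_{P_n}(\mathrm{pt})=A^*_{L}(\mathrm{pt})$, where $L$ is the Levi subgroup, a double cover of $GL_n$. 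Then $A^*_L(\mathrm{pt})$ is generated over $A^*(F)$ by the Chern classes $c_j$ of the tautological representation together with the first Chern class of the spin character $y$. In the image these are $c^*_j$ and $u$, and $c^*_1$ can be expressed through $u$ by the formal group law relation $[2]_F(u)=\sum_F x_i$. So the rational subring is generated as an $A^*(F)$-algebra by $u,c^*_2,\dots,c^*_n$.

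\emph{Reduction to the stated module generators.} The set $\{u^k\}$ is finite because $u^k=0$ for $k>\dim\OGr(n)$. It remains to show that products with some $i_j\ge2$ lie in the submodule spanned by the listed monomials. Here I would use Lemma \ref{lemma:chern_relations}, rewritten via $c^A(\taut_n)$ as a power series in the $c^*$'s through the formal inverse. This expresses $2(c^*_k)^2$, up to sign, through products of distinct $c^*$'s, $c^*_1$, and higher terms. The $c^*_1$ contributions can be absorbed into powers of $u$.

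The main obstacle is the factor $2$: in the integral or $2$-local setting one cannot divide by it. My first attempt would be to avoid division altogether by instead proving generation as follows. Using the Chow-level statement (Karpenko's computation of the image for $\OGr$, or Vishik's), the analogous monomials generate the rational cycles in $\CH^*$. Then lift through the graded Nakayama argument of \cite[Lemma 5.5]{Morava_SO}, applied to the surjection onto rational elements, using that $\overline{A}^*\otimes_{A^*(F)}\mathbb{Z}\to\overline{\CH}^*$ is surjective by freeness of $A^*$. If that fails, I would instead use a filtration by $\deg$ on the coefficient ring and induct on degree.
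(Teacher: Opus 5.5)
Your rationality argument is fine: every character of $P_n$, in particular $y$, gives a line bundle on $E/P_n$, because $E$ is a torsor under the simply connected group $\mathrm{Spin}_{2n+1}$. The gap is in the step that actually closes your generation argument.

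In the fallback you apply graded Nakayama to the module $\overline{A}^*$ of rational elements. For that you need the monomials to generate $\overline{A}^*/A^{<0}(F)\,\overline{A}^*$. What you actually know is only that they generate $\overline{\CH}^*$, and that $\overline{A}^*\otimes_{A^*(F)}\mathbb{Z}\to\overline{\CH}^*$ is surjective. Surjectivity points the wrong way. If $x\in\overline{A}^*$ agrees in $\CH^*(\OGr(n))$ with a combination $s$ of the monomials, you only learn $x-s\in\overline{A}^*\cap A^{<0}(F)\,A^*(\OGr(n))$, not $x-s\in A^{<0}(F)\,\overline{A}^*$. Tensoring $0\to\overline{A}^*\to A^*(\OGr(n))\to\underline{A}^*\to 0$ with $\mathbb{Z}$ identifies the discrepancy as $\Tor_1^{A^*(F)}(\underline{A}^*,\mathbb{Z})$, the kernel of $\overline{A}^*\otimes_{A^*(F)}\mathbb{Z}\to\CH^*(\OGr(n))$. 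This is the same kind of Tor group that the paper shows to be nonzero with $\mathbb{Z}[\beta]$ in place of $\mathbb{Z}$. Its elements come from torsion classes in $\CH^*(\OGr(\tilde q_{\mathrm{gen}}))$, and there is no reason for it to vanish. So generation of the image $\overline{\CH}^*$ is too weak an input.

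The repair is the paper's argument. Apply Nakayama one step earlier, to the whole module $\Omega^*(\OGr(\tilde q_{\mathrm{gen}}))$, where $\Omega^*(X)\otimes_{\mathbb{L}}\mathbb{Z}\cong\CH^*(X)$ holds exactly. Use the stronger fact that the monomials generate all of $\CH^*(\OGr(\tilde q_{\mathrm{gen}}))$ \cite[Proposition 2.1]{Karpenko_Spin_11_12}. Then take images under restriction, and pass to an arbitrary free $A^*$ by $-\otimes_{\mathbb{L}}A^*(F)$. This last step also avoids running Nakayama over a coefficient ring that need not be nonpositively graded with $A^0(F)=\mathbb{Z}$.

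Your equivariant idea is well suited to supply that stronger input, since $\CH^*_{P_n}(\mathrm{pt})\twoheadrightarrow\CH^*(E/P_n)$ concerns the whole twisted Chow ring rather than its image. The ``factor $2$'' obstacle you met there does not exist. In Lemma~\ref{lemma:chern_relations} the middle term $c_k(\taut_n)c_k(\taut_n^\vee)$ occurs once, while all other terms pair up. So in $\CH^*$ one gets $(c^*_k)^2=(-1)^{k+1}\,2\sum_{i<k}(-1)^i c^*_i c^*_{2k-i}$, with no division needed.
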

\begin{proof}
    All elements above are clearly rational. We claim that their arbitrary lifts generate the $A^*(F)$-module $A^*(\OGr(\tilde{q}_{\mathrm{gen}}))$. To prove this, it is sufficient to treat the case of algebraic cobordism $A^*=\Omega^*$. Moreover, it is well known that the images of (lifts of) these monomials under the projection $\Omega^*\to \CH^*$ generate $\CH^*(\OGr(\tilde{q}_{\mathrm{gen}}))$, see e.g., \cite[Proposition 2.1]{Karpenko_Spin_11_12} (note that for $A^*=\CH^*$ we have $u=-z_1$). The result then follows from the graded Nakayama lemma.
\end{proof}

In order to simplify computations with irrational elements, we need the following folklore fact, for which we did not find a reference in such generality.
\begin{proposition}\label{prop:t_mult_rat}
    Let $A^*$ be a free oriented cohomology theory.
    Denote by $t$ the torsion index of $\mathrm{Spin}_{2n+1}$.
    Then the $A^*(F)$-submodule of rational elements $\overline{A}^*(\OGr(\tilde{q}_{\mathrm{gen}}))$ contains $t\cdot A^*(\OGr(n))$.
    In other words, any multiple of $t$ is rational.
\end{proposition}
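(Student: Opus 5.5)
We will show that multiplication by $t$ kills $\underline{A}^*(\OGr(\tilde{q}_{\mathrm{gen}}))$. The idea is to pass to the variety of complete flags, which becomes split after a generic torsor splitting field of degree $t$. Write $E$ for the generic $\mathrm{Spin}_{2n+1}$-torsor underlying $\tilde{q}_{\mathrm{gen}}$, and $B\subset P_n$ for a Borel subgroup. By definition of the torsion index (Grothendieck, Totaro), the $\mathrm{Spin}_{2n+1}$-torsor $E$ is split by field extensions whose degrees have greatest common divisor $t$. Equivalently, the degree map $\CH_0(E/B)\to\mathbb{Z}$ has image $t\mathbb{Z}$: there are closed points of $E/B$ with residue fields $L_j$ over which $E$ becomes trivial, and $\gcd[L_j:F]=t$.

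The argument has three steps.

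\textbf{Step 1 (transfer argument).} Let $L/F$ be a finite extension splitting $E$. Over $L$, the variety $\OGr(\tilde{q}_{\mathrm{gen}})_L\cong\OGr(n)_L$ is split. Hence the restriction $A^*(\OGr(\tilde{q}_{\mathrm{gen}})_L)\to A^*(\OGr(n)_{\overline F})$ is surjective. Indeed, the Schubert (or $z$-monomial) basis from Proposition \ref{prop:generators} is defined over $L$, since the cellular structure is, and $A^*$ of a cellular variety is invariant under field extension. For $x\in A^*(\OGr(n)_{\overline F})$, choose a preimage $x_L$ over $L$ and set $y=\pi_*(x_L)\in A^*(\OGr(\tilde{q}_{\mathrm{gen}}))$, where $\pi$ is the finite projection from the base change to $L$.

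\textbf{Step 2 (computing the restriction).} We show that $\res_{\overline F/F}(y)=[L:F]\cdot x$, at least after replacing $L$ by a suitable extension. Over $\overline F$, the variety $\operatorname{Spec}L\times_F\overline F$ is a disjoint union of $[L:F]$ points (we are in characteristic zero, so $L/F$ is separable). By base change for transversal squares, $\res(y)=\sum_\sigma \sigma^*x$, where $\sigma$ runs over the embeddings of $L$ into $\overline F$. Each $\sigma^*x$ equals $x$, because $A^*(\OGr(n)_{\overline F})$ is spanned over $A^*(F)$ by classes defined over $F$ (the $z_i$-monomials, or Schubert classes of the split form over $k$), so the Galois action on it is trivial. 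Therefore $[L:F]\,x$ is rational.

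\textbf{Step 3 (Bézout).} Since $\gcd[L_j:F]=t$, an integer combination of the $[L_j:F]$ equals $t$. Hence $t\,x$ is rational, which proves the proposition.

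The main obstacle is justifying that $x$ lifts over $L$ and is Galois-invariant. Both facts reduce to the split form being defined over the base field $k$, so that $A^*(\OGr(n)_K)=A^*(k)$-span of cellular classes for every $K\supset k$, compatibly with restriction. This holds because free theories satisfy the cellular decomposition, as in Vishik--Yagita used above. We also need the statement that the gcd of the splitting degrees equals the torsion index; for this we invoke Totaro's characterization of $t(G)$ as the gcd of degrees of splitting fields of the generic torsor.
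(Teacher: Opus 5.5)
\textbf{Gap in Step 2.} The claim that every $\sigma^*x_L$ equals $x$ is not justified. Write $\sigma=g\circ\sigma_0$ with $g\in\mathrm{Gal}(\overline F/F)$. Then the claim says that $g$ acts trivially on $A^*(X_{\overline F})$, where $X=\OGr(\tilde q_{\mathrm{gen}})$, for the Galois action coming from the $F$-structure of $X$.

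Your justification uses the $F$-structure of the \emph{split} form instead. The $z$-monomials are defined over $F$ as classes on $\OGr(n)$, not on $X$; if they were classes on $X$, every element would be rational and there would be nothing to prove. The identification $X_{\overline F}\cong\OGr(n)_{\overline F}$ is only defined over $L$. It therefore transports the Galois action of $X$ to $a_g\circ g_{\mathrm{split}}$, where $a_g$ is a twisting cocycle, and your argument only disposes of $g_{\mathrm{split}}$.

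This is not a formality. Your reasoning applies verbatim to the quadric surface $Q$ of $\langle 1,-1,1,-d\rangle$ with $d$ a non-square, which is split by $L=F(\sqrt d)$. Its split form $\mathbb{P}^1\times\mathbb{P}^1$ has all classes defined over $F$. Yet Galois swaps the two rulings $\ell_1,\ell_2$ of $Q_{\overline F}$, so $\res_{\overline F/F}N_{L/F}(\ell_1)=\ell_1+\ell_2\neq 2\ell_1$, and $2\ell_1$ is not rational at all. Some input expressing that the twist is of inner type is therefore required, and your proposal never supplies it.

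\textbf{Repair.} There are two ways to supply it.
\begin{enumerate}
\item Observe that $a_g$ is left translation by an element of $\mathrm{Spin}_{2n+1}(\overline F)$, a group generated by root subgroups $\cong\mathbb{G}_a$. Each root element is $\mathbb{A}^1$-homotopic to the identity, so $a_g^*=\mathrm{id}$ by homotopy invariance.
\item Staying within the paper's tools: the classes $c_k^{\Omega}(\taut_n^\vee)$ come from $X$ over $F$, so they are fixed by the $\mathbb{L}$-linear ring automorphism $g^*$. Since $c_k^\Omega(\taut_n^\vee)=(-1)^k(2z_k+d_2z_{k+1}+\dots+d_{n+1-k}z_n)$, descending induction gives $2g^*(z_k)=2z_k$. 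Torsion-freeness then gives $g^*(z_k)=z_k$, exactly as in the uniqueness part of Proposition \ref{prop:generators}. Finally pass from $\Omega^*$ to $A^*$ via $-\otimes_{\mathbb{L}}A^*(F)$.
\end{enumerate}

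\textbf{Comparison with the paper.} With this fix your route is a genuine alternative. The paper never looks at the Galois action. It proves $\res_{L/F}\circ N_{L/F}=[L:F]$ using three ingredients: Panin's theorem that restriction on $\mathrm{K}_0$ is a rational isomorphism, transported to $\Omega^*\otimes\mathbb{Q}$ via the Chern character; the projection formula; and torsion-freeness of $\Omega^*(\OGr(n))$. Panin's theorem is where the inner-type hypothesis is hidden in the paper. Your base-change argument, once repaired, is more elementary and makes that hypothesis explicit.
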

\begin{proof}
    It suffices to consider the case of algebraic cobordism $A^*=\Omega^*$, as any other free theory is obtained by tensor product from it. Indeed, suppose we have an inclusion $t\cdot \Omega^*(\OGr(n))\subset \overline{\Omega}^*(\OGr(\tilde{q}_{\mathrm{gen}}))$ of subgroups of $\Omega^*(\OGr(n))$. The tensor product $-\otimes_{\mathbb{L}} A^*(F)$ and compatibility of $\Omega^*\to A^*$ with restrictions yield maps $$t\cdot A^*(\OGr(n))\to \overline{\Omega}^*(\OGr(\tilde{q}_{\mathrm{gen}}))\otimes_{\mathbb{L}}A^*(F)\to \overline{A}^*(\OGr(\tilde{q}_{\mathrm{gen}})).$$
    Since the composite is compatible with the obvious inclusions as subsets of $A^*(\OGr(n))$, it must be injective.
    Consequently, $t\cdot A^*(\OGr(n))$ is a subgroup of $\overline{A}^*(\OGr(\tilde{q}_{\mathrm{gen}}))$ as required.
    
    Let $L/F$ be a finite field extension that completely splits $\tilde{q}_{\mathrm{gen}}$. It follows that $\Omega^*(\OGr(\tilde{q}_{\mathrm{gen}})_L) = \Omega^*(\OGr(n))$. We claim that 
    $$\Omega^*(\OGr(n))\xrightarrow{N_{L/F}^\Omega}\Omega^*(\OGr(\tilde{q}_{\mathrm{gen}}))\xrightarrow{\res^\Omega_{L/F}}\Omega^*(\OGr(n))$$
    is multiplication by $[L:F]$, where $N_{L/F}^\Omega$ is the norm map (pushforward along the base change).
    
    Let $x\in\Omega^*(\OGr(n))$.
    We have isomorphisms of presheaves of rings:
    $$ \Omega^*\otimes \mathbb{Q}\xrightarrow{\simeq} \CH^*[b_1,b_2,\dots]\otimes \mathbb{Q}\xleftarrow{\simeq} \mathrm{K}_0[b_1,b_2,\dots]\otimes \mathbb{Q},  $$
    where the first isomorphism is induced by the logarithm of the universal formal group law (see \cite[Theorem 4.1.28]{LevMor}), and the second isomorphism is induced by the Chern character. Moreover, by Panin's result \cite{Panin_E/P}, the restriction map $\res_{L/F}^{\mathrm{K}_0}$ for $\OGr(\tilde{q}_{\mathrm{gen}})$ is an isomorphism. It follows that the restriction map $\res_{L/F}^{\Omega\otimes \mathbb{Q}}$ is also bijective. Hence, there exist $z\in\Omega^*(\OGr(\tilde{q}_{\mathrm{gen}}))$ and $n\in\mathbb{Z}$ such that $nx=\res^{\Omega}_{L/F}(z)$. Now consider the following chain of equalities:
    \begin{align*}
        n(\res_{L/F}^\Omega \circ N_{L/F}^\Omega)(x)=(\res_{L/F}^\Omega\circ N_{L/F}^\Omega\circ \res_{L/F}^\Omega)(z)= \res_{L/F}^\Omega ([L:F] z)=n[L:F]x.
    \end{align*}
    where the second one follows from the projection formula (the norm of $1$ is equal to $[L:F]$ by \cite[Lemma 2.3.5(1)]{LevMor}). The claim follows dividing the left-hand and right-hand sides by $n$, which is possible since $\Omega^*(\OGr(n))$ is torsion-free.
    
    The claim implies the result. Indeed, for the quadratic form $\tilde{q}_{\mathrm{gen}}$ the index $\mathrm{ind}(\OGr(\tilde{q}_{\mathrm{gen}}))$ (which is equal to the greatest common divisor of degrees of extensions $L/F$ that split $\tilde{q}_{\mathrm{gen}}$) coincides with the torsion index $t$ by Grothendieck's theorem \cite[Th\'eor\`eme 2]{Grothendieck}.
\end{proof}

\begin{remark}
    The above proposition holds for any projective homogeneous variety $X$ under a semisimple group of inner type $G$. The last assumption is needed for the result from \cite{Panin_E/P}, which provides an isomorphism $\res_{L/F}^{\mathrm{K}_0\otimes \mathbb{Q}}\colon\mathrm{K}_0(X)\otimes \mathbb{Q}\xrightarrow{\simeq} \mathrm{K}_0(X_L)\otimes \mathbb{Q}$ for any splitting field $L/F$.
\end{remark}

\begin{proposition}\label{prop:desired_v2_tors}
    The following elements are not rational in $\mathrm{BP}\langle2\rangle^*(\OGr(n))$, but their multiples by $v_2$ are:
    \begin{align*}
        4v_1z_{1,2,3,4,5,6,7}&\in\mathrm{BP}\langle2\rangle^*(\OGr(7)),\\
        8z_{1,2,3,4,5,6,7,8}&\in\mathrm{BP}\langle2\rangle^*(\OGr(8)),\\
        8z_{1,2,3,4,5,6,7,8,9}&\in\mathrm{BP}\langle2\rangle^*(\OGr(9)),\\
        16z_{1,2,3,4,5,6,7,8,9,10}&\in\mathrm{BP}\langle2\rangle^*(\OGr(10)),\\
        64z_{1,2,3,4,5,6,7,8,9,10,11,12,13}&\in\mathrm{BP}\langle2\rangle^*(\OGr(13)).
    \end{align*}
\end{proposition}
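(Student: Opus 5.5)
The plan is to reduce everything to a finite linear-algebra problem over $\mathbb{Z}_{(2)}$ in a single bidegree of $\BPtr^*(\OGr(n))$, and then carry it out by computer.

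Fix $n$ and the candidate element $w$ from the list. Its degree is $d=\deg(z_{1,\dots,n})-\deg(\text{coefficient})$: for example $d=28-1=27$ for $4v_1z_{1,\dots,7}$, and $d=\dim\OGr(n)=n(n+1)/2$ for the other cases. Since $\BPtr^*(F)=\mathbb{Z}_{(2)}[v_1,v_2]$ and $\BPtr^*(\OGr(n))$ is free with basis the square-free monomials $z_I$ (Proposition \ref{prop:generators}), each graded piece $\BPtr^d(\OGr(n))$ and $\BPtr^{d-3}(\OGr(n))$ is a free $\mathbb{Z}_{(2)}$-module of finite rank. Its basis consists of $v_1^av_2^bz_I$ with $|I|-a-3b$ equal to the given degree.

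By Lemma \ref{lemma:rat_generators}, the rational elements in a fixed degree form the $\mathbb{Z}_{(2)}$-span of $v_1^av_2^b u^k c^*_{i_2}\cdots c^*_{i_n}$ of that degree. I would expand each such generator in the $z_I$-basis as follows:
\begin{itemize}
\item use $c^*_k=(-1)^k\sum_i d_{i+1}z_{k+i}$ from Proposition \ref{prop:generators}, with $d_i$ taken from the $\BPtr$ formal group law;
\item use the approximation of $u$ from the preceding lemma;
\item reduce products to square-free monomials with Proposition \ref{prop:relations_z_i}.
\end{itemize}

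The relations are only given modulo $O(v^5)$. The justification is that in a fixed degree the $v$-degree of any term is bounded by $\dim\OGr(n)$ minus its $z$-degree. We only need exact expressions in degrees $d$ and $d-3$, near the top, so only boundedly many $v$-powers occur. For $n\le7$ one can instead use the exact relations computed by computer. For larger $n$ I would argue that in these top degrees the neglected $O(v^5)$ terms cannot contribute, since they would require $z$-monomials of degree exceeding $\dim\OGr(n)$, or would vanish after multiplying out. This is delicate, and I would check it carefully for each $n$.

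Once the generators are expanded, the rational lattices $R_d$ and $R_{d-3}$ are explicit $\mathbb{Z}_{(2)}$-submodules. The claim becomes two finite checks.
\begin{enumerate}
\item $v_2w\in R_{d-3}$. I would solve this as an integral linear system, and exhibit the certificate: an explicit combination of generators, with coefficients in $\mathbb{Z}_{(2)}$, equal to $v_2w$.
\item $w\notin R_d$. I would compute a Smith/Hermite normal form of the generator matrix over $\mathbb{Z}_{(2)}$ and check that the coordinate vector of $w$ is not in its image. Equivalently, I would find a linear functional that is $2$-integral on $R_d$ but takes a non-integral value on $w$, for instance half-integral.
\end{enumerate}

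Proposition \ref{prop:t_mult_rat} bounds the problem: $t\cdot\BPtr^*(\OGr(n))\subset R$. The Totaro torsion indices are $2^{n-?}$, for example $t(\mathrm{Spin}_{15})=2^4$ and $t(\mathrm{Spin}_{17})=2^5$ in the relevant range. So one can work modulo $t$ (or $2t$), making the lattice computation finite over $\mathbb{Z}/2^m$. It also explains the shape of the coefficients $4,8,16,64$: each is just below the threshold where the element becomes rational.

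The main obstacle is computational size. For $n=13$ the number of generators $u^kc^*_{I}$ times $v$-monomials in the relevant degree is large, and reducing products with the approximate relations is expensive. I would first reduce modulo $2^m$ with $m=\log_2 t+1$, and exploit that only top-degree monomials near $z_{1,\dots,n}$ matter. For $n=7$ there is a special difficulty: the candidate has a $v_1$ factor and lives one degree below the top, in $\CH_4$-type torsion. There I would rely on the exact (not approximate) relations for $n\le7$ already computed.
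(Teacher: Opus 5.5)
Your proposal is correct. For the rationality half, and for $n=7$, it is the paper's proof. The paper also expands the generators of Lemma \ref{lemma:rat_generators} by computer and works modulo $t$ via Proposition \ref{prop:t_mult_rat}. It exhibits explicit combinations such as $u^{63}c^*_{4,9,12}\equiv 64v_2z_{1,\dots,13}$ modulo $128$. It removes the $O(v^5)$ ambiguity by your degree count. That count is uniform rather than delicate: in degree $\ge\dim\OGr(n)-4$, a term $v^\alpha z_I$ with $\deg v^\alpha\le -5$ would need $\deg z_I>\dim\OGr(n)$, so no exact relations are needed even for $n=7$. For $n=7$ the paper then checks directly that no generator contributes a term dividing $4z_{1,\dots,7}$ or $4v_1z_{1,\dots,7}$ modulo $8$, which is your linear-functional certificate.

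Where you differ is non-rationality for $n=8,9,10,13$. You plan a Smith-normal-form computation of the top-degree rational lattice. That would mean expanding all $2^{n-1}$ products $u^{\dim-\sum I}c^*_I$, which is the bottleneck you anticipate for $n=13$. The paper needs no computation here. The element $w=2^{u(n)-1}z_{1,\dots,n}$ lives in top degree, where $\BPtr^{\dim}(\OGr(n))\cong\CH_0(\OGr(n))\otimes\mathbb{Z}_{(2)}$ has rank one, and the map $\BPtr^*\to\CH^*\otimes\mathbb{Z}_{(2)}$ sends rational elements to rational elements. So rationality of $w$ would contradict Grothendieck's theorem that the index of $\OGr(\tilde q_{\mathrm{gen}})$ equals $t=2^{u(n)}$. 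Your route is valid and would simply recover $R_{\dim}=2^{u(n)}\mathbb{Z}_{(2)}z_{1,\dots,n}$, but the paper's observation makes this half free. The remaining work is then only the certificate in degree $\dim-3$.

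One factual slip: the paper's moduli ($8$ for $n=7$, $16$ for $n=8$) reflect $t(\mathrm{Spin}_{15})=2^3$ and $t(\mathrm{Spin}_{17})=2^4$, not $2^4$ and $2^5$. Working modulo a too-large power of $2$ would still be sound, only wasteful. But only the correct values make each coefficient equal to $t/2$, as you remark.
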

\begin{proof}
    By Proposition \ref{prop:t_mult_rat} it is enough to perform all computations modulo the torsion index, which is determined by \cite[Theorem 0.1]{totaro} as $t=2^{u(n)}$ for an explicitly given number $u(n)$ (not to be confused with our element $u$).
    Concretely, using a computer (see \S \ref{subsection_code}, here we only need the approximate computations) one expresses all elements of the form $u^kc^*_I$ in terms of the $z_I$, to obtain:
    \begin{align*}
        u^{15}c^*_{4,5}&\equiv4v_1v_2z_{1,2,3,4,5,6,7}+O(v^5)\mod8\mathrm{BP}\langle2\rangle^*(\OGr(7)),\\
        u^{15}c^*_{2,3,6,7}+2u^{23}c^*_{4,6}&\equiv8v_2z_{1,2,3,4,5,6,7,8}+O(v^5)\mod16\mathrm{BP}\langle2\rangle^*(\OGr(8)),\\
        u^{31}c^*_{2,3,6}+v_1u^{31}c^*_{2,4,6}&\equiv8v_2z_{1,2,3,4,5,6,7,8,9}+O(v^5)\mod16\mathrm{BP}\langle2\rangle^*(\OGr(9)),\\
        u^{31}c^*_{4,8,9}&\equiv16v_2z_{1,2,3,4,5,6,7,8,9,10}+O(v^5)\mod32\mathrm{BP}\langle2\rangle^*(\OGr(10)),\\
        u^{63}c^*_{4,9,12}&\equiv64v_2z_{1,2,3,4,5,6,7,8,9,10,11,12,13}+O(v^5)\mod128\mathrm{BP}\langle2\rangle^*(\OGr(13)).
    \end{align*}
    For dimensional reasons, the $O(v^5)$ terms must be 0, so the equalities hold exactly.

    On the other hand, in the case $n=7$ none of the computed expressions contains a term dividing $4z_{1,2,3,4,5,6,7}$ or $4v_1z_{1,2,3,4,5,6,7}$, concluding the proof for this $n$.
    In the other cases, it is already enough to notice that if $2^{u(n)-1}z_{1,\ldots,n}\in\mathrm{BP}\langle2\rangle^*(\OGr(n))$ were rational, the same would be the case in $\CH^*(\OGr(n))\otimes\mathbb{Z}_{(2)}$, contradicting Grothendieck's theorem \cite[Th\'eor\`eme 2]{Grothendieck}.
\end{proof}

The cases $n=14,15,16$ in the main theorem will be concluded by induction from $n=13$ (the same argument would also work for going from $n=9$ to $n=10$, but the counter-example presented above looks slightly nicer).
For this purpose, we also need the following lemma.

\begin{lemma}\label{lemma:induction}
    Let $A^*$ be a free oriented cohomology theory such that $2\in A^*(F)$ is regular. Assume that $mz_{1,\ldots,n}\in A^*(\OGr(n))$ is rational for some $m\in\mathbb{Z}$.
    Then $2mz_{1,\ldots,n+1}\in A^*(\OGr(n+1))$ is rational.
\end{lemma}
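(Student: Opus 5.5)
We will prove that $2mz_{1,\ldots,n+1}$ is rational by pushing a rational element forward along the embedding $j_{n+1}\colon\OGr(n)\hookrightarrow\OGr(n+1)$. This push-forward is defined on generically twisted varieties, so it preserves rationality. The plan has three steps: compute $j_{n+1*}(z_{1,\ldots,n})$, check that push-forward of rational elements is rational, and adjust by rational elements to land exactly on $2mz_{1,\ldots,n+1}$.

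\textbf{Step 1.} We first compute $j_{n+1*}$ on the top basis monomial. By the projection formula and Lemma~\ref{pullback1},
$$j_{n+1*}(z_{1,\ldots,n})=j_{n+1*}\bigl(j_{n+1}^*(z_{1,\ldots,n})\bigr)=z_{1,\ldots,n}\cdot j_{n+1*}(1).$$
So it suffices to identify $j_{n+1*}(1)\in A^{n}(\OGr(n+1))$, noting that $\OGr(n)$ has codimension $n$ in $\OGr(n+1)$. The subvariety $j_{n+1}(\OGr(n))$ is the locus of maximal isotropic subspaces containing a fixed isotropic line $L$. Hence it is the zero locus of the induced section of $\taut_{n+1}^\vee\otimes L^{\vee}$, which is a rank $n+1$ bundle with one trivial direction. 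I would instead describe it as the zero locus of a section of the rank $n$ bundle $(\taut_{n+1}^\perp/\taut_{n+1}\ldots)$, or more directly via the correspondence with the quadric: it is the fiber of $\mathbb{P}(\taut_{n+1})\to Q$ over the point $[L]$.

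Using $k_*(\alpha)$ from the proof of Proposition~\ref{prop:generators}, the fundamental class of this fiber is the coefficient of the point class of $Q$ in the K\"unneth decomposition of $k_*(1)$. That coefficient is $z_{n+1}$ up to lower-order corrections in $\mathbb{L}$, i.e.\ something like $z_{n+1}+\ldots$, so $j_{n+1*}(1)\equiv z_{n+1}$ modulo terms of lower $z$-degree. Multiplying by $z_{1,\ldots,n}$, all correction terms either vanish by degree or are of the form (coefficient)$\cdot z_{1,\ldots,n}\cdot z_k$ with $k\le n$. Reducing such $z_k^2$ via the relations yields multiples of $2$ plus $v$-multiples of higher monomials. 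Since the target degree is top, only $z_{1,\ldots,n+1}$ survives.

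\textbf{Step 2.} Rationality is preserved: $mz_{1,\ldots,n}$ comes from $\OGr(\tilde q_{\mathrm{gen}})$ for $\dim 2n+1$. The generic form of dimension $2n+3$ over a suitable extension has a rational isotropic line after passing to a generic point of its quadric. This is where care is needed, so I would use the standard trick via the norm/pushforward from the variety of isotropic lines. Alternatively, I would use the generators from Lemma~\ref{lemma:rat_generators}: multiplying $u^k c^*_I$ by $c^*_{n+1}$, and using $c^*_{n+1}=(-1)^{n+1}2z_{n+1}$, gives the factor $2$. Concretely, I would show that $c^*_{n+1}\cdot x$ for rational $x$, viewed via $j^*$ compatibilities, produces $\pm 2m z_{1,\ldots,n+1}$. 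Since $c^*_{n+1}=\pm2z_{n+1}$ exactly by Proposition~\ref{prop:generators} and $z_{1,\ldots,n}z_{n+1}=z_{1,\ldots,n+1}$, it then suffices to lift $mz_{1,\ldots,n}$ to a rational element $y\in\overline{A}^*(\OGr(n+1))$ with $y\equiv mz_{1,\ldots,n}$ modulo the ideal annihilated by $z_{n+1}$.

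\textbf{Step 3.} To construct $y$, write $mz_{1,\ldots,n}$ as a combination of $u^kc^*_I$ with $I\subset\{2,\ldots,n\}$ over $\OGr(n)$, and take the same combination on $\OGr(n+1)$. By Lemma~\ref{pullback1}, $j_{n+1}^*$ maps $u$ to $u$ and $c^*_I$ to $c^*_I$, so the difference $y-mz_{1,\ldots,n}$ lies in $\ker j_{n+1}^*$. This kernel is spanned by monomials containing $z_{n+1}$, and $z_{n+1}$ times such a monomial is $z_{n+1}^2\cdot(\ldots)$. The main obstacle is showing that these products vanish or are rational. Since $z_{n+1}^2$ lies in degree $2n+2>\dim$ contributions of the appropriate degree, in top degree they are zero. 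Then $c^*_{n+1}y=\pm2mz_{1,\ldots,n+1}$ is rational, and the regularity of $2$ is used to make Steps 1--3 consistent with the relations.
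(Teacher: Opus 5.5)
Your fallback route in Steps 2--3 is the right one. Writing $mz_{1,\ldots,n}=\sum a_{k,I}u^kc^*_I$, taking the same combination $y$ on $\OGr(n+1)$, noting $y-mz_{1,\ldots,n}\in\kernel(j_{n+1}^*)=(z_{n+1})$, and multiplying by $c^*_{n+1}=(-1)^{n+1}2z_{n+1}$ is exactly the paper's argument. However, Step 1 has to be discarded, and the one non-formal step of the fallback route is justified by a false claim.

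\textbf{Step 1.} The embedding $j_{n+1}$ requires an isotropic line, and there is none over $F$ for the generic form of dimension $2n+3$. So $j_{n+1*}$ does not carry rational elements to rational ones. (Also, the codimension of $\OGr(n)$ in $\OGr(n+1)$ is $n+1$, not $n$.) It cannot preserve rationality. Your computation would make $mz_{1,\ldots,n+1}$ rational with no factor $2$. Taking $m=t(\mathrm{Spin}_{2n+1})$, which is allowed by Proposition \ref{prop:t_mult_rat}, this contradicts Grothendieck's theorem whenever the torsion index doubles, e.g.\ for $n=13\to 14$, which is exactly where the lemma is applied. The factor $2$ is essential, and it comes from $c^*_{n+1}=\pm2z_{n+1}$.

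\textbf{The gap.} You need $c^*_{n+1}\cdot(y-mz_{1,\ldots,n})=0$, i.e.\ $z_{n+1}c^*_{n+1}\cdot w=0$ for arbitrary $w$. Your reason, that $z_{n+1}^2$ lies in degree $2n+2>\dim$, is false. For $n\ge2$ we have $2n+2\le (n+1)(n+2)/2=\dim\OGr(n+1)$, so $z_{n+1}^2$ does not vanish for dimensional reasons. Landing in top degree also proves nothing by itself, since $z_{1,\ldots,n+1}$ lives there.

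The missing input is the relation $(c^*_{n+1})^2=0$. Lemma \ref{lemma:chern_relations} on $\OGr(n+1)$ with $k=n+1$ leaves only $c^A_{n+1}(\taut_{n+1})\,c^A_{n+1}(\taut_{n+1}^\vee)=0$. Since $[-1]_F(x)=-x\cdot(\text{unit})$, the first factor is $(-1)^{n+1}c^*_{n+1}$ times a unit, so $(c^*_{n+1})^2=0$. Hence $2\,z_{n+1}c^*_{n+1}=\pm(c^*_{n+1})^2=0$. Because $A^*(\OGr(n+1))$ is free over $A^*(F)$ and $2$ is regular, it follows that $z_{n+1}c^*_{n+1}=0$.

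This is where the regularity hypothesis is genuinely used, together with the definition of $u$. It is not used in the vague ``consistency of Steps 1--3'' you invoke.

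\textbf{Minor point.} $j_{n+1}^*(u)=u$ does not follow from Lemma \ref{pullback1} alone. You need $[2]_F(u)=c_1^A(\det\taut^\vee)$, the compatibility $j_{n+1}^*\det\taut_{n+1}^\vee\cong\det\taut_n^\vee$, and once more the regularity of $2$ to cancel $[2]_F$.
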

\begin{proof}
    Using the expression $[2]_F(u)=c_1(x_1)+_F\ldots+_Fc_1(x_{n+1})$ and the same approach of Lemma \ref{pullback1}, one sees that $j_{n+1}^*(u)=u\in A^1(\OGr(n))$.
    By Lemma \ref{lemma:rat_generators} we have $mz_{1,\ldots,n}=\sum_{k,I}a_{k,I}u^kc^*_I$ for some (not necessarily unique, so we fix one choice) $a_{k,I}\in A^*(F)$. The following vanishing is then obvious:
    \begin{align*}
        j^*_{n+1}\left(\sum_{k,I}a_{k,I}u^kc^*_I-mz_{1,\ldots,n}\right)&=0.
    \end{align*}
    By Lemma \ref{pullback1} again, the kernel of $j_{n+1}^*$ is generated as an ideal by $z_{n+1}$. Therefore, we obtain:
    \begin{align*}
        \sum_{k,I}a_{k,I}u^kc^*_{I\cup\{n+1\}}-mz_{1,\ldots,n}c^*_{n+1}&\in(z_{n+1}c^*_{n+1})\subseteq A^*(\OGr(n+1)).
    \end{align*}
    As $(c^*_{n+1})^2=0$ and $c^*_{n+1}=(-1)^{n+1}2z_{n+1}$, we have $z_{n+1}c^*_{n+1}=0$. Combining this observation with the previous formula we get
    \begin{align*}
        0&=\sum_{k,I}a_{k,I}u^kc^*_{I\cup\{n+1\}}-mz_{1,\ldots,n}c^*_{n+1}=\sum_{k,I}a_{k,I}u^kc^*_{I\cup\{n+1\}}-(-1)^{n+1}2mz_{1,\ldots,n+1}.
    \end{align*}
    This gives us an expression of $2mz_{1,\dots,n+1}$ as a linear combination of rational elements as required.
\end{proof}

\begin{proof}[Proof of Theorem \ref{main_theorem}]
    By \cite[Proposition 2.16]{Baek_Karpenko_counter-example} it suffices to consider the case of $\mathrm{Spin}_{2n+1}$. For $n=7,8,9,10,13$, Proposition \ref{prop:desired_v2_tors} provides a non-trivial element in the Tor group (see Example \ref{example:bptr})
    $$\Tor_1^{\mathbb{Z}_{(2)}[v_1,v_2]}(\underline{\BPtr}^*(\OGr(\tilde{q}_{\mathrm{gen}})),\mathbb{Z}_{(2)}[\beta]),$$
    and the previous lemma allows to conclude the same by induction for $n=14,15,16$ since in these cases $u(n)=u(n-1)+1$.
    By Proposition \ref{prop:p_local_karpenko}, the existence of such an element contradicts the conjecture.
\end{proof}

\begin{remark}\label{remark_degree_chow}
    By the proof of Proposition \ref{prop:p_local_karpenko} we construct non-trivial elements in the kernel of $$\overline{\BPtr}^*(\OGr(\tilde{q}_{\mathrm{gen}}))\otimes_{\mathbb{Z}_{(2)}[v_1,v_2]}\mathbb{Z}_{(2)}[\beta]\to \CK^*(\OGr(n)),$$
    which are given by $v_2$-multiples of elements stated in Proposition \ref{prop:desired_v2_tors} (note that those $v_2$-multiplies are non-trivial in the tensor product since we cannot move $v_2$ to the second factor). Lifts of these elements to $\CK^*(\OGr(\tilde{q}_{\mathrm{gen}}))$ through the epimorphism
    $$\CK^*(\OGr(\tilde{q}_{\mathrm{gen}}))\twoheadrightarrow \overline{\BPtr}^*(\OGr(\tilde{q}_{\mathrm{gen}}))\otimes_{\mathbb{Z}_{(2)}[v_1,v_2]}\mathbb{Z}_{(2)}[\beta]$$
    give torsion elements in $\CK^*(\OGr(\tilde{q}_{\mathrm{gen}}))$. Degrees of those are $\mathrm{dim}(\OGr(\tilde{q}_{\mathrm{gen}}))-3$ in all cases except $n=7$, when it is given by $\mathrm{dim}(\OGr(\tilde{q}_{\mathrm{gen}}))-4$. We claim that the resulting elements are not divisible by $\beta$. Indeed, for $n=7$, if it were divisible by $\beta$, then $$4v_1v_2z_{1,2,3,4,5,6,7}\otimes 1\in \overline{\BPtr}^*(\OGr(\tilde{q}_{\mathrm{gen}}))\otimes_{\mathbb{Z}_{(2)}[v_1,v_2]}\mathbb{Z}_{(2)}[\beta]$$ would be divisible by $\beta$, which is impossible as $4v_2z_{1,2,3,4,5,6,7}$ is not rational and $\overline{\BPtr}^*(\OGr(\tilde{q}_{\mathrm{gen}}))$ has no $v_2$-torsion.
    In the remaining cases, the corresponding $v_2$-multiples clearly cannot be divided by $v_1$ even in $\BPtr^*(\OGr(n))$.
    
    It follows from the claim and proof of \cite[Theorem 3.1]{KarpCK} that these elements yield us torsion elements of the same degrees in $\CH^*(\OGr(\tilde{q}_{\mathrm{gen}}))$ that vanish under $\varphi_{\OGr(\tilde{q}_{\mathrm{gen}})}$. In particular, for $n=7$ the constructed torsion element belongs to $\CH_4(\OGr(\tilde{q}_{\mathrm{gen}}))$, while for all other $n$ the corresponding elements belong to  $\CH_3(\OGr(\tilde{q}_{\mathrm{gen}}))$.
\end{remark}

\appendix

\section{The universal \texorpdfstring{$p$}{p}-typical formal group law}\label{appendix_fgl}
The formal group law of the Brown-Peterson cohomology is known more or less explicitly, in terms of its logarithm.
More precisely, writing it as $l(t)=\sum_{i\ge0}l_it^{p^i}\in\mathbb{Z}_{(p)}[v_1,v_2,\ldots][[t]]$, one choice is given by Hazewinkel's recursive formula following \cite[A2.2.1]{Ra04}
\begin{align*}
    l_0&=1, \\
    l_n&=p^{-1}\sum_{i=0}^{n-1}l_iv_{n-i}^{p^i}.
\end{align*}
In concrete terms, for $p=2$ the power series and its compositional inverse (computed by hand or using Sage) are as follows:
\begin{align*}
    l(t)&=t+\tfrac12v_1t^2+\left(\tfrac14v_1^3+\tfrac12v_2\right)t^4+O(v^5), \\
    l^{-1}(t)&=t-\tfrac12v_1t^2+\tfrac12v_1^2t^3-\left(\tfrac78v_1^3+\tfrac12v_2\right)t^4+\left(\tfrac{13}8v_1^4+\tfrac32v_1v_2\right)t^5+O(v^5).
\end{align*}
The approximation of the formal group law $F_{\mathrm{BP}}$ now can be computed easily
\begin{align*}
\begin{split}
    x+_F y=l^{-1}(l(x)+l(y))&=x+y-v_1xy+v_1^2(x^2y+xy^2)-v_1^3(2x^3y+4x^2y^2+2xy^3)\\
    &-v_2(2x^3y+3x^2y^2+2xy^3)+v_1^4(3x^4y+10x^3y^2+10x^2y^3+3xy^4)\\
    &+v_1v_2(4x^4y+11x^3y^2+11x^2y^3+4xy^4)+O(v^5).
\end{split}
\end{align*}
In the main part of the text we also use the approximations of the inverse power series and the formal multiplication by $2$
\begin{align*}
    [-1]_F(t)&=l^{-1}(-l(t))=-t-v_1t^2-v_1^2t^3-(2v_1^3+v_2)t^4-(4v_1^4+3v_1v_2)t^5+O(v^5), \\
    [2]_F(t)&=l^{-1}(2l(t))=2t-v_1t^2+2v_1^2t^3-(8v_1^3+7v_2)t^4+(26v_1^4+30v_1v_2)t^5+O(v^5).
\end{align*}

\section{Some symmetric polynomial identities}\label{appendix_sym_pol}
Consider the symmetric polynomial ring $R=\mathbb{Z}[x_1,\dots,x_n]^{S_n}$. Denote by $\sigma_k\in R$ the $k$-th elementary symmetric polynomial. It is well-known that they freely generate $R$ as a ring. For an integer partition $\lambda=(\lambda_1,\dots,\lambda_m)$ we put $\sigma_{\lambda}=\prod_{i=1}^m \sigma_{\lambda_i}\in R$. Moreover, for $\lambda$ we also introduce the monomial symmetric polynomial $m_\lambda=\sum_{g\in S_n/(S_n)_\lambda} x^{g\lambda}\in R$. We also denote by $\lambda^k$ the partition $(\lambda,\dots,\lambda)$ of $k\lambda$.

Our goal is to express certain monomial symmetric polynomials as linear combinations of $\sigma_\lambda$'s.
The following identities expressing a product of at most two elementary or monomial symmetric polynomials as a linear combination of monomial symmetric polynomials will be useful, and are easily obtained from the definitions:
\begin{align*}
        \sigma_i&=m_{1^i}\\
        \sigma_{1,i}&=m_{2,1^{i-1}}+(i+1)m_{1^{i+1}}\\
        \sigma_{2,i}&=m_{2^2,1^{i-2}}+im_{2,1^i}+\binom{i+2}2m_{1^{i+2}}\\
        \sigma_{3,i}&=m_{2^3,1^{i-3}}+(i-1)m_{2^2,1^{i-1}}+\binom{i+1}2m_{2,1^{i+1}}+\binom{i+3}3m_{1^{i+3}}\\
        \sigma_{4,i}&=m_{2^4,1^{i-4}}+(i-2)m_{2^3,1^{i-2}}+\binom i2m_{2^2,1^i}+\binom{i+2}3m_{2,1^{i+2}}+\binom{i+4}4m_{1^{i+4}}\\
        \sigma_1m_{2,1^{i-1}}&=m_{3,1^{i-1}}+2m_{2^2,1^{i-2}}+im_{2,1^i}\\
        \sigma_2m_{2,1^{i-1}}&=m_{3,2,1^{i-2}}+im_{3,1^i}+3m_{2^3,1^{i-3}}+2(i-1)m_{2^2,1^{i-1}}+\binom{i+1}2m_{2,1^{i+1}}\\
        \begin{split}
            \sigma_3m_{2,1^{i-1}}&=m_{3,2^2,1^{i-3}}+(i-1)m_{3,2,1^{i-1}}+\binom{i+1}2m_{3,1^{i+1}}+4m_{2^4,1^{i-4}}+3(i-2)m_{2^3,1^{i-2}}\\
            &+2\binom i2m_{2^2,1^i}+\binom{i+2}3m_{2,1^{i+2}}
        \end{split} \\
        \sigma_1m_{3,1^{i-1}}&=m_{4,1^{i-1}}+m_{3,2,1^{i-2}}+im_{3,1^i}\\
        \sigma_2m_{3,1^{i-1}}&=m_{4,2,1^{i-2}}+im_{4,1^i}+m_{3,2^2,1^{i-3}}+(i-1)m_{3,2,1^{i-1}}+\binom{i+1}2m_{3,1^{i+1}}\\
        \sigma_2m_{2^2,1^{i+2}}&=m_{3^2,1^{i-2}}+2m_{3,2^2,1^{i-3}}+(i-1)m_{3,2,1^{i-1}}+6m_{2^4,1^{i-4}}+3(i-2)m_{2^3,1^{i-2}}+\binom i2m_{2^2,1^i}\\
        \sigma_1m_{4,1^{i-1}}&=m_{5,1^{i-1}}+m_{4,2,1^{i-2}}+im_{4,1^i}
    \end{align*}
    We can now solve these equations for the monomial symmetric polynomials, by proceeding in the graded-lexicographic order.
    The general idea is to start by decomposing the leading monomial in a way that the exponents of both factors stay nonincreasing, and then look at the product of the corresponding monomial symmetric polynomials.
    We already know how to express both factors in terms of elementary symmetric polynomials, and the same holds for the undesired monomials appearing in the product, since by construction the leading monomial of the product is the correct one.
    The following expressions are obtained (the two factors are always the unique ones in the above relations giving the correct leading monomial):
    \begin{align*}
        m_{1^i}&=\sigma_i\\
        m_{2,1^{i-1}}&=\sigma_{1,i}-(i+1)\sigma_{i+1}\\
        m_{2^2,1^{i-2}}&=\sigma_{2,i}-i\sigma_{1,i+1}+\tfrac{i^2+i-2}2\sigma_{i+2}\\
        m_{3,1^{i-1}}&=\sigma_{1^2,i}-2\sigma_{2,i}-\sigma_{1,i+1}+(i+2)\sigma_{i+2}\\
        m_{2^3,1^{i-3}}&=\sigma_{3,i}-(i-1)\sigma_{2,i+1}+\tfrac{i^2-i-2}2\sigma_{1,i+2}-\tfrac{i^3-7i+6}6\sigma_{i+3}\\
        m_{3,2,1^{i-2}}&=\sigma_{1,2,i}-3\sigma_{3,i}+(2i-2)\sigma_{2,i+1}-i\sigma_{1^2,i+1}+(2i+1)\sigma_{1,i+2}-(i^2+2i-3)\sigma_{i+3}\\
        m_{4,1^{i-1}}&=\sigma_{1^3,i}-3\sigma_{1,2,i}+3\sigma_{3,i}-\sigma_{1^2,i+1}+2\sigma_{2,i+1}+\sigma_{1,i+2}-(i+3)\sigma_{i+3}\\
        \begin{split}
            m_{2^4,1^{i-4}}&=\sigma_{4,i}-(i-2)\sigma_{3,i+1}+\tfrac{i^2-3i}2\sigma_{2,i+2}-\tfrac{i^3-3i^2-4i+12}6\sigma_{1,i+3}\\
            &+\tfrac{i^4-2i^3-13i^2+38i-24}{24}\sigma_{i+4}
        \end{split} \\
        \begin{split}
            m_{3,2^2,1^{i-3}}&=\sigma_{1,3,i}-4\sigma_{4,i}+(3i-6)\sigma_{3,i+1}-(i-1)\sigma_{1,2,i+1}-(i^2-4i)\sigma_{2,i+2}+\tfrac{i^2-i-2}2\sigma_{1^2,i+2}\\
            &-\tfrac{3i^2-i-10}2\sigma_{1,i+3}+\tfrac{i^3+i^2-10i+8}2\sigma_{i+4}
        \end{split} \\
        \begin{split}
            m_{3^2,1^{i-2}}&=\sigma_{2^2,i}-2\sigma_{1,3,i}+2\sigma_{4,i}-\sigma_{1,2,i+1}+3\sigma_{3,i+1}-(2i+1)\sigma_{2,i+2}+(i+1)\sigma_{1^2,i+2}\\
            &-(i+1)\sigma_{1,i+3}+\tfrac{i^2+3i-4}2\sigma_{i+4}
        \end{split} \\
        \begin{split}
            m_{4,2,1^{i-2}}&=\sigma_{1^2,2,i}-2\sigma_{2^2,i}-\sigma_{1,3,i}+4\sigma_{4,i}+(3i-1)\sigma_{1,2,i+1}-i\sigma_{1^3,i+1}-(3i-3)\sigma_{3,i+1}\\
            &-(2i-2)\sigma_{2,i+2}+i\sigma_{1^2,i+2}-(2i+2)\sigma_{1,i+3}+(i^2+3i-4)\sigma_{i+4}
        \end{split} \\
        \begin{split}
            m_{5,1^{i-1}}&=\sigma_{1^4,i}-4\sigma_{1^2,2,i}+4\sigma_{1,3,i}+2\sigma_{2^2,i}-4\sigma_{4,i}-\sigma_{1^3,i+1}+3\sigma_{1,2,i+1}-3\sigma_{3,i+1}+\sigma_{1^2,i+2}\\
            &-2\sigma_{2,i+2}-\sigma_{1,i+3}+(i+4)\sigma_{i+4}
        \end{split}
    \end{align*}

\bibliographystyle{siam}
\bibliography{Karpenko_spin}

\end{document}